\def\titlerunning#1{\gdef\titrun{#1}}
\def\author#1{\gdef\autrun{\def\and{\unskip, }#1}\gdef\@author{#1}}
\def\address#1{{\def\and{\\\hspace*{18pt}}\renewcommand{\thefootnote}{}%
		\footnote {#1}}%
	\markboth{\autrun}{\titrun}}
\def\email#1{e-mail: #1}
\def\subjclass#1{{\renewcommand{\thefootnote}{}%
		\footnote{\emph{Mathematics Subject Classification (2010):} #1}}}
\def\keywords#1{\par\medskip
	\noindent\textbf{Keywords.} #1}
\newtheorem{theorem}{Theorem}[section]
\newtheorem{corollary}[theorem]{Corollary}
\newtheorem{lemma}[theorem]{Lemma}
\newtheorem{proposition}[theorem]{Proposition}
\theoremstyle{definition}
\newtheorem{definition}[theorem]{Definition}
\newtheorem{remark}[theorem]{Remark}
\numberwithin{equation}{section}
\def \C {\mathbb{C}}
\def \G {\mathcal{G}}
\def \de {\delta}
\def \la {\lambda}
\def\w {\omega}
\def\Om{\Omega}
\def\na {\nabla}
\begin{document}
\baselineskip=17pt

\titlerunning{the complex Yang-Mills equations on closed $4$-manifolds}
\title{Some boundedness properties of solutions to the complex Yang-Mills equations on closed $4$-manifolds}

\author{Teng Huang}

\date{}

\maketitle

\address{T. Huang: School of Mathematical Sciences, University of Science and Technology of China; Key Laboratory of Wu Wen-Tsun Mathematics, Chinese Academy of Sciences, Hefei, 230026, P.R. China; \email{htmath@ustc.edu.cn; htustc@gmail.com}}
\subjclass{58E15;81T13}

\begin{abstract}
In this article, we study the {analytical} properties of the solutions of the complex Yang-Mills equations on a closed Riemannian four-manifold $X$ with a Riemannian metric $g$. The main result is that if $g$ is $good$ and the connection is an approximate ASD connection, then the extra field has a positive lower {bound}. As an application, we obtain some gap results for Yang-Mills connections and Kapustin-Witten equations. 
\end{abstract}
\keywords{complex Yang-Mills equations, Kapustin-Witten equations, gauge theory}
\section{Introduction}

Let $X$ be a smooth, $n$-dimensional manifold {endowed} with a smooth Riemannian metric, $g$, $P$ be a principal $G$-bundle over $X$. The structure group $G$ is assumed to be a compact Lie group with Lie algebra $\mathfrak{g}$.  {We denote by $A$ a smooth connection on $P$. The curvature of $A$ is a $2$-form $F_{A}$ with values in $\mathfrak{g}_{P}$,where  $\mathfrak{g}_{P}:=P\times_{G}\mathfrak{g}$ is the adjoint bundle of $P$.}  We define by $d_{A}$ the exterior covariant derivative on {sections} of $\Om^{\bullet}(X,\mathfrak{g}_{P}):=\Om^{\bullet}T^{\ast}X\otimes\mathfrak{g}$ with respect to the connection $A$. We denote by $\phi$ a one-form {taking} values  in $\mathfrak{g}_{P}$. The curvature $\mathcal{F}_{\mathcal{A}}$ of the complex connection ${\mathcal{A}}:=A+\sqrt{-1}\phi$ is a two-form with values in  $\mathfrak{g}_{P}^{\C}:=P\times_{G}(\mathfrak{g}\otimes{\C})$ \cite[Equation (2.1)]{Gagliardo/Uhlenbeck:2012}:
$$\mathcal{F}_{\mathcal{A}}:=[(d_{A}+\sqrt{-1}\phi)\wedge(d_{A}+\sqrt{-1}\phi) ]=F_{A}-\phi\wedge\phi+\sqrt{-1}d_{A}\phi.$$
{We call that the connection $\mathcal{A}$ is  $complex$ it only means that the connection $\mathcal{A}$ is a $1$-form with value in $\mathfrak{g}^{\mathbb{C}}_{P}$ but the base manifold is always real Riemannian manifold.} The complex Yang-Mills functional is defined in any dimension as the norm squared of the complex curvature:
$$YM_{\C}(A,\phi):=\frac{1}{2}\int_{X}|\mathcal{F}_{\mathcal{A}}|^{2}=\frac{1}{2}\int_{X}(|F_{A}-\phi\wedge\phi|^{2}+|d_{A}\phi|^{2}).$$
A short calculation shows that the gradient of the complex Yang-Mills energy functional $YM_{\C}$ with respect to the $L^{2}$-metric on $C^{\infty}(X,\Om^{1}(\mathfrak{g}_{P})\oplus\Om^{1}(\mathfrak{g}_{P}))$,
\begin{equation}\nonumber
(\mathcal{M}(A,\phi),(a,b))_{L^{2}(X)}:=\frac{d}{dt}YM_{\C}
(A+ta,\phi+tb)\mid_{t=0}=YM_{\C}'(A,\phi)(a,b),
\end{equation}
for all $(a,b)\in C^{\infty}(X,\Om^{1}(\mathfrak{g}_{P})\oplus\Om^{1}(\mathfrak{g}_{P}))$, is given by
\begin{equation}\nonumber
\begin{split}
(\mathcal{M}(A,\phi),(a,b))_{L^{2}(X)}:&=(d_{A}^{\ast}(F_{A}-\phi\wedge\phi),a)_{L^{2}(X)}+([a,\phi],d_{A}\phi)_{L^{2}(X)}\\
&-([\phi,b],F_{A})_{L^{2}(X)}+(d_{A}^{\ast}d_{A}\phi,b)_{L^{2}(X)}.
\\
\end{split}
\end{equation}
where $d_{A}^{\ast}$ is the $L^{2}$ adjoint of the exterior covariant derivative $d_{A}$ We call $(A,\phi)$ a complex Yang-Mills pair with respect to the Riemannian metric $g$ on $X$, if it is a critical point for $YM_{\C}$, that is, $\mathcal{M}(A,\phi)=0$, i.e., the pair $(A,\phi)$ satisfies
\begin{subequations}\label{E1.1}
\begin{numcases}{}
d^{\ast}_{A}(F_{A}-\phi\wedge\phi)-(-1)^{n}\ast[\phi,\ast d_{A}\phi]=0,\\
d_{A}^{\ast}d_{A}\phi+(-1)^{n}\ast[\phi,\ast(F_{A}-\phi\wedge\phi)]=0.
\end{numcases}
\end{subequations}
These can also be succinctly written as,
\begin{equation}\label{E9}
d^{\ast_{\C}}_{\mathcal{A}}\mathcal{F}_{\mathcal{A}}=0,
\end{equation}
where the Hodge operator $\ast_{\C}$ is the $\C$-linearly extended  of $\ast$. The curvature $\mathcal{F}_{\mathcal{A}}$ of connection $\mathcal{A}$ always obeys the $Bianchi$ $identity$ 
$$d_{\mathcal{A}}\mathcal{F}_{\mathcal{A}}=0.$$
But the complex Yang-Mills equations are not elliptic, even after the real gauge-equivalence, so it is necessary to add the moment map condition \cite{Gagliardo/Uhlenbeck:2012},
$$d_{A}^{\ast}\phi=0.$$
{In this article, we consider the solutions of the complex Yang-Mills equations with the moment map condition.}

Note that the complex Yang-Mills $L^{2}$-energy functional {reduces} to the pure Yang-Mills $L^{2}$-energy functional when $\phi=0$, respectively
$$YM(A):=\frac{1}{2}\int_{X}|F_{A}|^{2}dvol_{g},$$
and $A$ is a Yang-Mills connection with respect to the Riemannian metric $g$ on $X$ if it is a critical point for $YM(A)$ , that is,
$$\mathcal{M}(A)=d_{A}^{\ast}F_{A}=0.$$
For any smooth connection $A$, we recall that 
$$F_{A}=F_{A}^{+}+F_{A}^{-}\in\Om^{2}(X,\mathfrak{g}_{P})=\Om^{2,+}(X,\mathfrak{g}_{P})\oplus\Om^{2,-}(X,\mathfrak{g}_{P}),$$
corresponding to the positive and negative eigenspaces $\Om^{2,\pm}$ of the Hodge star operator $\ast:\Om^{2}\rightarrow\Om^{2}$ defined by the Riemannian metric $g$ so $F_{A}^{\pm}=\frac{1}{2}(1\pm\ast)F_{A}\in\Om^{2,\pm}(X,\mathfrak{g}_{P})$. We denote  $$\mathcal{B}_{\de}(P,g):=\{A:\|F_{A}^{+}\|_{L^{2}(X)}\leq\de\}$$ and
$$\mathcal{B}^{\C}_{\de}(P,g):=\{\mathcal{A}:\|F_{\mathcal{A}}^{+}\|_{L^{2}(X)}\leq\de\}.$$
Suppose that $(A,\phi)$ is a smooth solution of complex Yang-Mills equations. If $A\in\mathcal{B}_{\de}(P,g)$,  then by the Energy identity in Proposition \ref{P4}, $$\|F_{\mathcal{A}}^{+}\|_{L^{2}(X)}\leq \|F_{A}^{+}\|_{L^{2}(X)},$$
i.e., 
$$\mathcal{B}_{\de}\cap\{\mathcal{A}:\mathcal{M}(A,\phi)=0\}\subset\mathcal{B}^{\C}_{\de}\cap\{\mathcal{A}:\mathcal{M}(A,\phi)=0\}.$$
There are many articles {studying} the energy gap of Yang-Mills connection. The motivation of these gap results is partly from physics and partly from math--to better understand the {behaviour} of the Yang-Mills functional near its critical points. On four-dimensional manifold, Feehan \cite{Feehan2014.12} showed that energies associated to  non-minimal  Yang-Mills connections on a principal bundle over a {closed}, four-dimensional, smooth manifold  are separated from the energy of the minimal Yang-Mills connections by a  uniform positive constant depending at most on the Riemannian metric and the Pontrjagin degree of the principal bundle, i.e., the Yang-Mills connections on $\mathcal{B}_{\de}(P,g)$, for a suitable positive constant $\de$, must be ASD connection. For higher dimensional manifolds case, Feehan applied the Lojasiewicz-Simon gradient inequality \cite[Theorem 3.2]{Feehan2015} to {give} a complete proof of energy gap for pure Yang-Mills. The author has given another proof of this theorem without using Lojasiewicz-Simon gradient inequality \cite{HuangCRM}. 

We would like to understand the behaviour of the complex Yang-Mills functional near its critical points. It is an interesting question to consider, whether the complex Yang-Mills equations have the energy gap phenomenon. We study a simplest case that the connection {is} in $\mathcal{B}_{\de}$. The Theorem 1.2 on \cite{HuangSIGMA} states that if the first Pontrjagin number $p_{1}(P)[X]$ is zero and the flat connections on $P$ are all $non$-$degenerate$, then the real curvature $F_{A}$ of the complex Yang-Mills connection should {have a uniform positive lower bound} unless the connection $A$ is flat.

In this article, we study the solutions of complex-Yang-Mills equations on a $G$-bundle over a closed four-manifold. Because there are many potential combinations of conditions on $G$, $P$, $X$ and $g$ which imply that $\operatorname{{Coker}}d_{A}^{+}=0$ when $A$ is anti-self-dual with respect to the Riemannian metric $g$, it is convenient to introduce the $Good$ $Riemannian$ $Metric$  .
\begin{definition}\label{D1}(\cite[Definition 1.3]{Feehan2014.12})
Let $G$ be a compact, simple Lie group, $X$ be a closed, connected, four-dimensional, oriented, smooth manifold, and $\eta\in H^{2}(X;\pi_{1}(G))$ be an obstruction class. We say that a Riemannian metric $g$ on $X$ is $good$ if for every principal $G$-bundle $P$ over $X$ with $\eta(P)=\eta$ and non-positive first Pontrjagin number $p_{1}(P)[X]$ and every smooth connection $A$ on $P$ with $F_{A}^{+}=0$ on $X$, then $\operatorname{Corker}d^{+}_{A}=0$.
\end{definition}	
We denote by $\mu_{g}(A)$ the least eigenvalue of operator $d_{A}^{+}d_{A}^{+,\ast}$ \cite[Definition 3.1]{Taubes1982}. It's easy to see that $\mu_{g}(A)$ is a function with respect to connection $A$ and $\operatorname{Corker}d^{+}_{A}=0$ {is equivalent to} $\mu_{g}(A)>0$. Feehan proved  that $\mu_{g}(A)$  has a positive lower bound with respect to $[A]\in\mathcal{B}_{\varepsilon}(P,g)$ under {assumption that} the Riemannian metric $g$ is $good$.  We then proved that 
\begin{theorem}\label{T1}
Let $G$ be a compact, simple Lie group and P be a principal $G$-bundle over a closed, connected, four-dimensional, oriented, smooth manifold $X$ with Riemannian metric $g$ that is $good$ in the sense of Definition \ref{D1}. Then there exists a positive constant $\varepsilon=\varepsilon(g,P)$ {such that the following holds.} If the pair $(A,\phi)$ is a smooth solution of complex Yang-Mills equations on $P$ such that 
\begin{equation}\nonumber
{	0\neq\|F_{A}^{+}\|_{L^{2}(X)}\leq\varepsilon,}
\end{equation}
then { there exists a positive constant  $C=C(g,P,\varepsilon)$ such that} $$ \|\phi\|_{L^{2}(X)}\geq C.$$
\end{theorem}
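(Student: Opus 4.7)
The plan combines three ingredients: (i) Feehan's uniform spectral gap $\mu_g(A) \ge \mu_0(g, P, \varepsilon) > 0$ on $\B_\varepsilon(P, g)$, available because the metric $g$ is good; (ii) the first complex Yang-Mills equation (1.1a), which with $n=4$ rewrites $d_A^* F_A$ as a purely quadratic expression in $\phi$ and $d_A\phi$; and (iii) the second equation (1.1b) together with the moment-map gauge $d_A^*\phi = 0$, which converts into an elliptic equation for $\phi$ amenable to a Bochner--Weitzenbock bootstrap. The target is an inequality of the form $\mu_0 \|F_A^+\|_{L^2}^2 \le C(g, P, \varepsilon)\|\phi\|_{L^2}^{2\alpha}$ for some $\alpha > 0$, which under the standing hypothesis $F_A^+ \ne 0$ yields the desired lower bound $\|\phi\|_{L^2} \ge C$.

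The master inequality rests on the four-dimensional identity $d_A^* F_A^+ = \tfrac{1}{2} d_A^* F_A$, a consequence of the Bianchi identity $d_A F_A = 0$ together with $\ast^2 = 1$ on 2-forms. Applying the eigenvalue estimate and (1.1a) I obtain
\[
\mu_0 \|F_A^+\|_{L^2}^2 \;\le\; \|d_A^{+,\ast} F_A^+\|_{L^2}^2 \;=\; \tfrac{1}{4}\big\|d_A^*(\phi\wedge\phi) + \ast[\phi, \ast d_A\phi]\big\|_{L^2}^2.
\]
Expanding $d_A^*(\phi\wedge\phi)$ in local coordinates, using the moment-map condition to kill the $\nabla^j\phi_j$ contribution, and applying H\"older bounds the right-hand side pointwise by $C|\phi|^2|\nabla_A\phi|^2$, hence in $L^2$ by $C\|\phi\|_{L^4}^2\|\nabla_A\phi\|_{L^4}^2$.

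The elliptic bootstrap promotes this to a bound in $\|\phi\|_{L^2}$. Combining (1.1b) with the moment-map gauge gives $\Delta_A\phi = -\ast[\phi, \ast(F_A - \phi\wedge\phi)]$; pairing with $\phi$ and invoking the adjoint identity $(\ast[\phi, \ast\omega], \phi)_{L^2} = -(\phi\wedge\phi, \omega)_{L^2}$ produces
\[
\|d_A\phi\|_{L^2}^2 + \|\phi\wedge\phi\|_{L^2}^2 \;\le\; C\,\|F_A\|_{L^2}\,\|\phi\wedge\phi\|_{L^2}.
\]
Together with the topological a priori bound $\|F_A\|_{L^2}^2 \le 2\varepsilon^2 + 8\pi^2|p_1(P)|$, the Bochner--Weitzenbock formula for 1-forms, Kato's inequality, and the four-dimensional Gagliardo--Nirenberg inequality $\|\phi\|_{L^4}^2 \le C\|\phi\|_{L^2}\|\nabla_A\phi\|_{L^2}$, one iterates to control $\|\phi\|_{L^4}$ and $\|\nabla_A\phi\|_{L^4}$ by a positive power of $\|\phi\|_{L^2}$, closing the argument when substituted back into the master inequality.

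The principal obstacle is the scaling of this last step when $p_1(P) < 0$: then $\|F_A\|_{L^2}$ is bounded below by $\sqrt{8\pi^2|p_1(P)|}$ independently of $\varepsilon$, so the nonlinearity $[\phi, F_A]$ appearing in the Bochner--Weitzenbock formula for $\phi$ cannot be absorbed by smallness of curvature. Closing the bootstrap in this regime will require exploiting the cubic self-interaction $[\phi, \phi\wedge\phi]$ and the specific combination appearing in (1.1b) -- for instance through a Moser-type iteration -- or, alternatively, a rescaling and compactness argument that compares any hypothetical sequence $(A_n, \phi_n)$ with $\|\phi_n\|_{L^2} \to 0$ and $\|F_{A_n}^+\|_{L^2} > 0$ against the limiting anti-self-dual configuration supplied by Feehan's gap theorem for pure Yang-Mills on good metrics.
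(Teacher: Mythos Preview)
Your master inequality is logically insufficient for the conclusion. An estimate of the shape
\[
\mu_0\|F_A^+\|_{L^2}^2 \le C(g,P,\varepsilon)\|\phi\|_{L^2}^{2\alpha}
\]
only yields $\|\phi\|_{L^2}\ge (\mu_0/C)^{1/2\alpha}\|F_A^+\|_{L^2}^{1/\alpha}$, and this lower bound goes to zero with $\|F_A^+\|_{L^2}$. The hypothesis $F_A^+\ne 0$ gives no positive lower bound on $\|F_A^+\|_{L^2}$, so your inequality cannot produce a constant $C=C(g,P,\varepsilon)$ independent of the solution. In fact the paper already records a version of your inequality (Proposition~\ref{P3} gives $\|F_A^+\|_{L^4}\le C\|\phi\|_{L^2}^2$, via exactly the $L^\infty$--$L^2$ control of $\phi$ and $\na_A\phi$ that you are trying to bootstrap), and it is used only as a preparatory step.

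What is actually needed is an inequality that closes on itself and forces $F_A^+=0$ when $\|\phi\|_{L^2}$ is small. The paper does this in two steps. First, it perturbs $A$ to a genuine anti-self-dual connection $A_\infty$ with $\|A_\infty-A\|_{L^2}\le c\|F_A^+\|_{L^2}$ (Theorem~\ref{T3}, Proposition~\ref{P1}); comparing the Weitzenb\"ock identities for $A$ and $A_\infty$ on $\phi$ gives $4\|(\phi\wedge\phi)^+\|_{L^2}^2\le c\|F_A^+\|_{L^2}^2\|\phi\|_{L^2}^2$. Second, Proposition~\ref{P2} supplies the reverse estimate $\|F_A^+\|_{L^2}\le c(1+\|\phi\|_{L^2}^2)\|(\phi\wedge\phi)^+\|_{L^2}$. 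Substituting one into the other yields $\|(\phi\wedge\phi)^+\|_{L^2}^2 \le cE^2(1+E^2)^2\|(\phi\wedge\phi)^+\|_{L^2}^2$ with $E=\|\phi\|_{L^2}$, which for small $E$ forces $(\phi\wedge\phi)^+=0$ and hence $F_A^+=0$. Your closing remark about comparing with a limiting ASD configuration is pointing in the right direction, but the argument is direct rather than by compactness, and the key object you are missing is $(\phi\wedge\phi)^+$ as the quantity to close the loop on, not $\|\phi\|_{L^2}$ itself.
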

\begin{remark}
For a Riemannian metric $g$ on a four-dimensional, oriented manifold $X$ let $R(x)$ denote its scalar curvature at a point $x\in X$ and let $\mathcal{W}^{\pm}(x)$ denote its self-dual and anti-self-dual Weyl curvature tensors at $x$. Define $w^{\pm}:=$Largest eigenvalue of $\mathcal{W}^{\pm}(x)$, $\forall x\in X$.	When $g$ is positive in the sense of \cite[Definition 3.1]{Feehan2014.12}, i.e.,  $\frac{1}{3}R-2\w^{+}>0$ on $X$, then \cite[Lemma 3.3]{Feehan2014.12} implies that $g$ is $good$ metric.  For $generic$ metric $g$ on $X$, suppose that $G$, $P$, and $X$ obey one of the sets of conditions as the following, \\
(1) $b^{+}(X)=0$, $G=SU(2)$ or $SO(3)$,\\
(2) $b^{+}(X)>0$, $G=SO(3)$ and the second Stiefel-Whitney class, $w_{2}(P)\neq 0$,\\
the author also {claimed} that $g$ is $good$ in the sense of Definition \ref{D1} \cite[Corollaries 3.9]{Feehan2014.12}. We mean by $generic$ metric the metrics in a second category subset of the
space of $C^{k}$ metrics for some fixed  $k>2$ \cite[Page 13]{Donaldson}.
\end{remark}
There are many examples of complex Yang-Mills equations. For example, the Kapustin-Witten equations {   }\cite{Gagliardo/Uhlenbeck:2012,Haydy,Kapustin/Witten},
\begin{equation}\label{E20}
(F_{A}-\phi\wedge\phi)^{+}=0,\  (d_{A}\phi)^{-}=d_{A}^{\ast}\phi=0.
\end{equation}
on a closed four-dimension manifolds $X$. In fact, the Kapustin-Witten equations can be seen as a general anti-self-dual $G_{\C}$-connection \cite{Gagliardo/Uhlenbeck:2012}. If $X$ is a closed K\"{a}hler surface, the Kapustin-Witten is equivalent to the Hitchin-Simpson equations \cite{Hitchin,Simpson1988}. From Theorem \ref{T1}, we would also obtain a lower {bound} on extra field of Kapustin-Witten equations. Kapustin and Witten originally obtained the following family of equations parametrized by $\tau$ from a topological twist of $\mathcal{N}=4$ super Yang–Mills theory in 4 dimensions.
\begin{equation}\label{E21}
(F_{A}-(\phi\wedge \phi)+\tau d_{A}\phi)^{+}=0,\ (F_{A}-\phi\wedge\phi-\tau^{-1}d_{A}\phi)^{-}=0,\ d^{\ast}_{A}\phi=0
\end{equation}
The Eqs. (\ref{E20}) are the case for $\tau=0$ in the above. The set of Eqs. (\ref{E21}) for $\tau=\{\infty\}$ is the “orientation reversed” one to the Eqs. (\ref{E20})  \cite[Section 2]{Gagliardo/Uhlenbeck:2012}.
\begin{corollary}\label{C1}(\cite[Theorem 1.1]{HuangLMP})
Assume the hypotheses of Theorem \ref{T1}. Then there exists a positive constant $\varepsilon=\varepsilon(g,P)$ with following significance.\ If the pair $(A,\phi)$ is a smooth solution of Kapustin-Witten equations on $P$ such that 
$$\|\phi\|_{L^{2}(X)}\leq\varepsilon$$
then  $A$ is anti-self-dual with respect to the metric $g$. 
\end{corollary}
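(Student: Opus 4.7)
The plan is to derive Corollary \ref{C1} from Theorem \ref{T1} by contraposition. Any smooth Kapustin-Witten pair $(A,\phi)$ automatically satisfies the complex Yang-Mills system (\ref{E1.1}) together with the moment map condition $d_A^\ast\phi=0$, so Theorem \ref{T1} applies. Let $\varepsilon_0=\varepsilon_0(g,P)$ and $C_0=C_0(g,P,\varepsilon_0)$ be the constants it furnishes. The contrapositive asserts: whenever $\|\phi\|_{L^2(X)}<C_0$, one must have either $F_A^+=0$ or $\|F_A^+\|_{L^2(X)}>\varepsilon_0$. It therefore suffices to rule out the second alternative when $\|\phi\|_{L^2(X)}$ is small.

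For a Kapustin-Witten solution the self-dual equation $F_A^+=(\phi\wedge\phi)^+$ gives the pointwise bound $|F_A^+|\leq c|\phi|^2$ and hence
$$\|F_A^+\|_{L^2(X)}\leq c_1\|\phi\|_{L^4(X)}^{2}.$$
To pass from $\|\phi\|_{L^2(X)}$ to $\|\phi\|_{L^4(X)}$ I would exploit the remaining two equations $(d_A\phi)^-=0$ and $d_A^\ast\phi=0$ via a Bochner/Weitzenb\"ock argument. With $d_A\phi$ self-dual and $d_A^\ast\phi=0$, the Weitzenb\"ock formula for one-forms takes the schematic shape
$$2(d_A^{+})^\ast d_A^{+}\phi=\nabla_A^\ast\nabla_A\phi+\mathrm{Ric}(\phi)+\ast[\ast F_A,\phi].$$
Pairing with $\phi$, integrating, and inserting $F_A^+=(\phi\wedge\phi)^+$ together with the topological identity $\|F_A^-\|_{L^2(X)}^{2}=\|F_A^+\|_{L^2(X)}^{2}-8\pi^{2}p_1(P)[X]$ controls $\|F_A\|_{L^2(X)}$ in terms of $\|F_A^+\|_{L^2(X)}$ and $p_1(P)[X]$; one then arrives, under a smallness assumption that absorbs the quartic self-interaction into the linear part, at an a priori bound
$$\|\phi\|_{L^4(X)}\leq K(g,P)\|\phi\|_{L^2(X)}$$
through the Sobolev embedding $W^{1,2}(X)\hookrightarrow L^{4}(X)$.

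Combining the two displays, if $\varepsilon$ is chosen smaller than both $C_0$ and $(\varepsilon_0/c_1)^{1/2}/K(g,P)$, the hypothesis $\|\phi\|_{L^2(X)}\leq\varepsilon$ forces $\|F_A^+\|_{L^2(X)}<\varepsilon_0$ and $\|\phi\|_{L^2(X)}<C_0$, so Theorem \ref{T1} leaves only the possibility $F_A^+=0$, i.e.,\ $A$ is anti-self-dual with respect to $g$. The main obstacle is making the $L^4$-estimate uniform: the nonlinear term $\phi\wedge\phi$ appears on both sides of the Bochner inequality (once through $F_A^+$, once through the curvature action on $\phi$), and some Uhlenbeck-type small-energy gauge fixing may be needed to ensure the constant $K(g,P)$ does not depend on the particular connection $A$ but only on $(g,P)$.
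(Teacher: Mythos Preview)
Your overall strategy---take the contrapositive of Theorem \ref{T1} and show that small $\|\phi\|_{L^{2}}$ forces $\|F_{A}^{+}\|_{L^{2}}\leq\varepsilon_{0}$---is exactly the paper's approach. The only difference is in how you obtain the key estimate $\|F_{A}^{+}\|_{L^{2}}\leq C\|\phi\|_{L^{2}}^{2}$.

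The paper does this in one line via the Gagliardo--Uhlenbeck $L^{\infty}$ bound (their Theorem for complex Yang--Mills solutions, recorded here as $\|\phi\|_{L^{\infty}}\leq C(g)\|\phi\|_{L^{2}}$): since $F_{A}^{+}=(\phi\wedge\phi)^{+}$ for a Kapustin--Witten pair,
\[
\|F_{A}^{+}\|_{L^{2}}=\|(\phi\wedge\phi)^{+}\|_{L^{2}}\leq c\,\|\phi\|_{L^{\infty}}\|\phi\|_{L^{2}}\leq C(g)\,\|\phi\|_{L^{2}}^{2}.
\]
No Weitzenb\"ock argument, no $F_{A}^{-}$, and no gauge fixing are needed.

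Your route through an $L^{4}$ bound also works, but the obstacle you flag is not real, and the detour through $\|F_{A}^{-}\|_{L^{2}}$ is unnecessary. The schematic Weitzenb\"ock you wrote carries the full curvature $F_{A}$; this is what makes you worry about a connection-dependent constant. Instead use the second-order identity that already holds for \emph{any} complex Yang--Mills (hence Kapustin--Witten) solution, equation (\ref{E8}):
\[
\|\nabla_{A}\phi\|_{L^{2}}^{2}+\langle \mathrm{Ric}\circ\phi,\phi\rangle_{L^{2}}+2\|\phi\wedge\phi\|_{L^{2}}^{2}=0.
\]
Here the curvature of $A$ has been eliminated in favor of $\phi\wedge\phi$, so one reads off $\|\nabla_{A}\phi\|_{L^{2}}\leq C(g)\|\phi\|_{L^{2}}$ directly. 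Kato's inequality plus the Sobolev embedding $W^{1,2}\hookrightarrow L^{4}$ then give $\|\phi\|_{L^{4}}\leq K(g)\|\phi\|_{L^{2}}$ with $K$ depending only on the metric. So your argument closes, but the paper's use of the $L^{\infty}$ maximum-principle bound is the cleaner shortcut.
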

When $\phi=0$, the complex Yang-Mills equations are { reduced} to pure Yang-Mills equation. {Following the estimate on Proposition \ref{P3}, we have}
\begin{corollary}\label{C2}(\cite[Theorem 1]{Feehan2014.12})
	Assume the hypotheses of Theorem \ref{T1}. Then there exists a positive constant $\varepsilon=\varepsilon(g,P)$ with following significance.\ If $A$ is a smooth Yang-Mills connection on $P$ with respect to the metric $g$, such that 
	\begin{equation}\nonumber
	\|F_{A}^{+}\|_{L^{2}(X)}\leq\varepsilon
	\end{equation}
	then  $A$ is anti-self-dual with respect to the metric $g$. 
\end{corollary}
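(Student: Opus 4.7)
The plan is to derive Corollary \ref{C2} as an immediate specialization of Theorem \ref{T1} by taking the extra field to be identically zero. First I would observe that if $\phi\equiv 0$, then the complex curvature reduces to $\mathcal{F}_{\mathcal{A}}=F_{A}$ and the system \eqref{E1.1} collapses to the single equation $d_{A}^{\ast}F_{A}=0$, while the moment map condition $d_{A}^{\ast}\phi=0$ is automatically satisfied. Consequently, whenever $A$ is a smooth Yang-Mills connection on $P$, the pair $(A,0)$ is a smooth solution of the complex Yang-Mills equations with moment map constraint on $P$, in the sense of the hypotheses of Theorem \ref{T1}.

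Next I would argue by contradiction. Let $\varepsilon=\varepsilon(g,P)$ and $C=C(g,P,\varepsilon)$ be the positive constants provided by Theorem \ref{T1}. Suppose $A$ is a smooth Yang-Mills connection with $\|F_{A}^{+}\|_{L^{2}(X)}\leq\varepsilon$ that is \emph{not} anti-self-dual, so that $F_{A}^{+}\neq 0$, i.e., $0\neq\|F_{A}^{+}\|_{L^{2}(X)}\leq\varepsilon$. Applying Theorem \ref{T1} to the complex Yang-Mills pair $(A,0)$ would then yield
\[
0=\|\phi\|_{L^{2}(X)}\geq C>0,
\]
which is absurd. Hence $F_{A}^{+}=0$, so $A$ is anti-self-dual with respect to $g$, possibly after shrinking $\varepsilon$ if needed to align with the constant in Theorem \ref{T1}.

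There is no genuine obstacle in this argument; the only thing to verify carefully is that the conditions on $(A,\phi)$ entering Theorem \ref{T1} (smoothness, the moment map condition, being a critical point of $YM_{\C}$) do hold vacuously or trivially when $\phi=0$ and $A$ is a classical Yang-Mills connection, so that the theorem legitimately applies. The hint in the excerpt about "following the estimate on Proposition \ref{P3}" suggests an alternative route, in which one inspects the explicit energy/$\phi$-estimate underlying Theorem \ref{T1} and specializes it to $\phi\equiv 0$ to read off the gap constant directly; but the contradiction route above is the cleanest and does not require opening that black box.
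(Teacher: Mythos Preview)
Your argument is correct, but it takes a different route from the paper. The paper deduces Corollary~\ref{C2} directly from Proposition~\ref{P3}: that estimate reads $\|F_{A}^{+}\|_{L^{4}(X)}\leq C\|\phi\|_{L^{2}(X)}^{2}$, and setting $\phi\equiv 0$ yields $F_{A}^{+}=0$ immediately, with no contradiction step needed. Your route instead invokes the full Theorem~\ref{T1} as a black box and derives a contradiction from $0=\|\phi\|_{L^{2}}\geq C>0$. Both are valid, but the paper's approach is strictly more elementary: Proposition~\ref{P3} is one of the ingredients feeding into the proof of Theorem~\ref{T1}, whereas Theorem~\ref{T1} additionally requires the Taubes-type perturbation to a nearby ASD connection (Proposition~\ref{P1} and Theorem~\ref{T3}) together with the Weitzenb\"{o}ck comparison argument. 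In other words, you are appealing to a result whose proof already contains, as an early step, everything needed for the corollary. You even anticipated this in your final paragraph; the paper simply takes that shorter path.
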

The organization of this paper is as follows. In section 2, we first review some estimates of the complex Yang-Mills equations and Chern-Weil formula. In section 3, we proved that $A$ must be a ASD connection if $\|F_{A}^{+}\|_{L^{2}(X)}+\|\phi\|_{L^{2}(X)}$ is sufficiently small and $g$ is $good$ metric, when the $(A,\phi)$ is a $C^{\infty}$ solution of complex Yang-Mills equation. The main idea of this result is following the Kapustin-Witten equations case \cite{HuangLMP}. 
\section{Fundamental preliminaries }
We shall generally adhere to the now standard gauge-theory conventions and notation of Donaldson and Kronheimer \cite{Donaldson/Kronheimer} and Feehan \cite{Feehan2014.09}. Throughout our article, $\Om^{p}(X,\mathfrak{g}_{P})$ denote the smooth $p$-forms with values in $\mathfrak{g}_{P}$. Given a connection on $P$, we denote by $\na_{A}$ the corresponding covariant derivative on $\Om^{\ast}(X,\mathfrak{g}_{P})$ induced by $A$ and the Levi-Civita connection of $X$. Let $d_{A}$ denote the exterior derivative associated to $\na_{A}$. For $u\in L^{p}_{k}(X,\mathfrak{g}_{P})$, where $1\leq p<\infty$ and $k$ is an integer, we denote
\begin{equation}\nonumber
\|u\|_{L^{p}_{k}(X)}:=\big{(}\sum_{j=0}^{k}\int_{X}|\na^{j}_{A}u|^{p}dvol_{g}\big{)}^{1/p},
\end{equation}
where $\na^{j}_{A}:=\na_{A}\circ\ldots\circ\na_{A}$ (repeated $j$ times for $j\geq0$). For $p=\infty$, we denote
\begin{equation}\nonumber
\|u\|_{L^{\infty}_{k}(X)}:=\sum_{j=0}^{k}\operatorname{ess}\sup_{X}|\na^{j}_{A}u|.
\end{equation}
\subsection{Chern-Weil formula}
Given a connection $A$ on $P$, Chern-Weil theory provides representatives for the $first$ $Pontrjagin$ $class$ of $\mathfrak{g}_{P}$, namely 
$$p_{1}(P)=-\frac{1}{4\pi^{2}}\operatorname{tr}(F_{A}\wedge F_{A}),$$
and hence the $first$ $Pontrjagin$ $number$ 
$$p_{1}(P)[X]=-\frac{1}{4\pi^{2}}\int_{X}\operatorname{tr}(F_{A}\wedge F_{A}).$$
The Principal $G$-bundle $P$ are classified, see \cite[Propositions A.1 and A.2]{Taubes1982} by a cohomology class $\eta(P)\in H^{2}(X;\pi_{1}(G))$ and the  $first$ $Pontrjagin$ $number$ $p_{1}(P)[X]$

Assume in addition that $X$ is equipped with a Riemannian metric $g$, we recall some facts concerning the Killing form \cite{Feehan2014.09}. Every element $\xi$ of a Lie algebra $\mathfrak{g}$ over a field $\mathbb{K}$ defines an adjoint endomorphism, $\rm{ad}\xi\in End_{\mathbb{K}}\mathfrak{g}$, with the help of the Lie bracket via $(\rm{ad}\xi)(\zeta):=[\xi,\zeta]$, for all $\zeta\in\mathfrak{g}$. For a finite-dimensional Lie algebra $\mathfrak{g}$, its Killing form is the symmetric bilinear form, $B(\xi,\zeta):=\operatorname{tr}(\rm{ad}\xi\circ \rm{ad}\zeta)$, $\forall\xi,\zeta\in\mathfrak{g}$, with values in $\mathbb{K}$. The Killing form of a semisimple Lie algebra is negative definite. In particular, if $B_{\mathfrak{g}}$ is the Killing form on $\mathfrak{g}$, then it defines an inner product on $\mathfrak{g}$ via $(\cdot,\cdot)_{\mathfrak{g}}=-B_{\mathfrak{g}}(\cdot,\cdot)$ and thus a norm $|\cdot|_{\mathfrak{g}}$ on $\mathfrak{g}$.

The $4$-dimensional topological term for $-4\pi^{2}p_{1}(P)[X]$ extends algebraically to the complex function on $-4\pi^{2}p_{1}(P_{\mathbb{C}})[X]$:
\begin{equation*}
\begin{split}
\int_{X}\operatorname{tr}(\mathcal{F}_{\mathcal{A}}\wedge\mathcal{F}_{\mathcal{A}}):&=\int_{X}\operatorname{tr}\big{(}(F_{A}-\phi\wedge\phi)\wedge(F_{A}-\phi\wedge\phi)\\
&-(d_{A}\phi\wedge d_{A}\phi)+2\sqrt{-1}(F_{A}-\phi\wedge\phi)\wedge d_{A}\phi\big{)}.\\
\end{split}
\end{equation*}
The following proposition which proved by Gagliardo-Uhlenbeck \cite{Gagliardo/Uhlenbeck:2012}.
\begin{proposition}
	If $X$ is a closed manifold, then
	\begin{equation}\label{E11}
 \int_{X}\operatorname{tr}(\mathcal{F}_{\mathcal{A}}\wedge\mathcal{F}_{\mathcal{A}})=\int_{X}\operatorname{tr}(F_{A}\wedge F_{A}).
	\end{equation}
\end{proposition}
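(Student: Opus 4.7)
The plan is to expand the wedge square of $\mathcal{F}_{\mathcal{A}}=(F_{A}-\phi\wedge\phi)+\sqrt{-1}d_{A}\phi$ and show that the difference $\operatorname{tr}(\mathcal{F}_{\mathcal{A}}\wedge\mathcal{F}_{\mathcal{A}})-\operatorname{tr}(F_{A}\wedge F_{A})$ is a sum of an exact $4$-form and a term that vanishes pointwise; then Stokes' theorem on the closed manifold $X$ finishes the proof. Writing out the expansion, one gets
\begin{equation*}
\operatorname{tr}(\mathcal{F}_{\mathcal{A}}\wedge\mathcal{F}_{\mathcal{A}})-\operatorname{tr}(F_{A}\wedge F_{A})=\operatorname{tr}(\phi^{\wedge 4})-2\operatorname{tr}(F_{A}\wedge\phi\wedge\phi)-\operatorname{tr}(d_{A}\phi\wedge d_{A}\phi)+2\sqrt{-1}\operatorname{tr}\bigl((F_{A}-\phi\wedge\phi)\wedge d_{A}\phi\bigr),
\end{equation*}
so the task reduces to controlling these four terms separately.

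For the purely algebraic term, the graded cyclicity of the trace applied to the $1$-form $\phi$ gives $\operatorname{tr}(\phi^{\wedge 4})=(-1)^{1\cdot 3}\operatorname{tr}(\phi^{\wedge 4})$, hence it vanishes identically. For the two imaginary cross terms I would use the Bianchi identity $d_{A}F_{A}=0$ together with the derivation property $d\operatorname{tr}(\alpha\wedge\beta)=\operatorname{tr}(d_{A}\alpha\wedge\beta)+(-1)^{|\alpha|}\operatorname{tr}(\alpha\wedge d_{A}\beta)$ (which holds because the $A$-contributions cancel by $\operatorname{Ad}$-invariance of $\operatorname{tr}$). This gives $\operatorname{tr}(F_{A}\wedge d_{A}\phi)=d\operatorname{tr}(F_{A}\wedge\phi)$, and a similar manipulation on $d\operatorname{tr}(\phi\wedge\phi\wedge\phi)$ together with cyclicity yields $\operatorname{tr}(\phi\wedge\phi\wedge d_{A}\phi)=\tfrac{1}{3}d\operatorname{tr}(\phi^{\wedge 3})$; both are exact.

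For the remaining two real terms I would exploit $d_{A}^{2}\phi=[F_{A},\phi]$: starting from $d\operatorname{tr}(\phi\wedge d_{A}\phi)=\operatorname{tr}(d_{A}\phi\wedge d_{A}\phi)-\operatorname{tr}(\phi\wedge[F_{A},\phi])$ and reducing $\operatorname{tr}(\phi\wedge[F_{A},\phi])$ via graded cyclicity to $-2\operatorname{tr}(F_{A}\wedge\phi\wedge\phi)$, I obtain
\begin{equation*}
\operatorname{tr}(d_{A}\phi\wedge d_{A}\phi)+2\operatorname{tr}(F_{A}\wedge\phi\wedge\phi)=d\operatorname{tr}(\phi\wedge d_{A}\phi),
\end{equation*}
which is again exact. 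Integrating the identity over the closed manifold $X$, every exact $4$-form disappears by Stokes' theorem, and the pointwise-vanishing term $\operatorname{tr}(\phi^{\wedge 4})$ contributes nothing, proving the stated equality. The only delicate point is bookkeeping of signs when applying graded cyclicity and the graded Leibniz rule to forms of different degrees; once these are handled consistently, the proof is a pure calculation with no analytic content.
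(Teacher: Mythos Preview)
Your proof is correct and follows essentially the same route as the paper's: both use the pointwise vanishing $\operatorname{tr}(\phi^{\wedge 4})=0$, write the imaginary part as $d\operatorname{tr}(F_{A}\wedge\phi)-\tfrac{1}{3}d\operatorname{tr}(\phi^{\wedge 3})$, and combine the remaining two real terms into the exact form $d\operatorname{tr}(\phi\wedge d_{A}\phi)$ via $d_{A}^{2}\phi=[F_{A},\phi]$. The only cosmetic difference is that you carry out the identities pointwise before integrating, whereas the paper writes the integrals from the start.
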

\begin{proof}
Noting that $\operatorname{tr}(\phi\wedge\phi\wedge\phi\wedge\phi)=0$. Then, it's easy to see that
\begin{equation*}
\begin{split}
&\quad\int_{X}\operatorname{tr}\big{(}(F_{A}-\phi\wedge\phi)\wedge(F_{A}-\phi\wedge\phi)\big{)}\\
&=\int_{X}\operatorname{tr}(F_{A}\wedge F_{A})-2\int_{X}\operatorname{tr}(F_{A}\wedge(\phi\wedge\phi))+\int_{X}\operatorname{tr}(\phi\wedge\phi\wedge\phi\wedge\phi)\\
&=\int_{X}\operatorname{tr}(F_{A}\wedge F_{A})-2\int_{X}\operatorname{tr}(F_{A}\wedge(\phi\wedge\phi)).\\
\end{split}
\end{equation*}
We also observe that
\begin{equation*}
\begin{split}
\int_{X}\operatorname{tr}(d_{A}\phi\wedge d_{A}\phi)&=\int_{X}\operatorname{tr} d_{A}(\phi\wedge d_{A}\phi)+\int_{X}\operatorname{tr}(\phi\wedge[F_{A},\phi])\\
&=2\int_{X}\operatorname{tr}(F_{A}\wedge(\phi\wedge\phi)),\\
\end{split}
\end{equation*}
and 
\begin{equation*}
\begin{split}
\int_{X}\operatorname{tr}(F_{A}-\phi\wedge\phi)\wedge d_{A}\phi&=\int_{X}\operatorname{tr}d_{A}(F_{A}\wedge\phi)-\frac{1}{3}\int_{X}\operatorname{tr}d_{A}(\phi\wedge\phi\wedge\phi)\\
&=\int_{X}d\operatorname{tr}(F_{A}\wedge\phi)-\frac{1}{3}\int_{X}d\operatorname{tr}(\phi\wedge\phi\wedge\phi)=0.\\
\end{split}
\end{equation*}
Combining the preceding identities gives (\ref{E11}).
\end{proof}
On a four-dimensional manifold, a form $u$ with value in $\Om^{2}(\mathfrak{g}_{P}^\mathbb{C})$, we can decompose $u$ to  $u^{+}:=\frac{1}{2}(u+\ast\bar{u})$ and $u^{-}:=\frac{1}{2}(u-\ast\bar{u})$, so it follows that
$\ast u^{+}=\bar{u}^{+}$, $\ast u^{-}=-\bar{u}^{-}$ and
$Re(u^{+}\wedge u^{-})=0$, then
$$-|u|^{2}dvol_{g}=\operatorname{tr}(u\wedge u)-2|u^{-}|^{2}dvol_{g}.$$
Hence we take $u=\mathcal{F}_{\mathcal{A}}$ to obtain
\begin{equation*}
\begin{split}
\|\mathcal{F}_{\mathcal{A}}\|^{2}_{L^{2}(X)}&=\|\mathcal{F}^{+}_{\mathcal{A}}\|^{2}_{L^{2}(X)}+\|\mathcal{F}^{-}_{\mathcal{A}}\|^{2}_{L^{2}(X)}\\
&=2\|\mathcal{F}^{-}_{\mathcal{A}}\|^{2}_{L^{2}(X)}+4\pi^{2}p_{1}(P)[X]\\
&=2\|\mathcal{F}^{+}_{\mathcal{A}}\|^{2}_{L^{2}(X)}-4\pi^{2}p_{1}(P)[X].\\
\end{split}
\end{equation*}
\subsection{Identities for the solutions}
In this section, we recall some basic identities that obeying the solutions to complex Yang-Mills connections. A nice discussion of these identities can be found in \cite{Gagliardo/Uhlenbeck:2012}. At first, we recall some Weitzenb\"{o}ck formulas \cite{Bourguignon/Lawson} and \cite[Equs.(6.25) and (6.26)]{Freed/Uhlenbeck}.
\begin{theorem}
	On four-dimensional manifold, we then have 
	\begin{subequations}
		\begin{numcases}{}
		2d_{A}^{-,\ast}d_{A}^{-}+d_{A}d^{\ast}_{A}=\na^{\ast}_{A}\na_{A}+Ric(\cdot)+2\ast[F^{+}_{A},\cdot]\ on\ \Om^{1}(X,\mathfrak{g}_{P})\\
		2d_{A}^{+}d_{A}^{+,\ast}=\na^{\ast}_{A}\na_{A}+(\frac{1}{3}R-2\w^{+})(\cdot)+\{F^{+}_{A},\cdot\} on\ \Om^{2}(X,\mathfrak{g}_{P}).
		\end{numcases}
	\end{subequations}
\end{theorem}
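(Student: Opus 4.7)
The plan is to derive both identities from the universal twisted Bochner--Weitzenb\"{o}ck formula
$$(d_A d_A^{\ast} + d_A^{\ast} d_A)\alpha = \nabla_A^{\ast}\nabla_A\alpha + \mathcal{R}^k(\alpha) + F_A\cdot\alpha$$
on $\Om^k(X,\mathfrak{g}_P)$, where $\mathcal{R}^k$ is the Weitzenb\"{o}ck curvature operator on $k$-forms (equal to $Ric$ on $\Om^1$) and $F_A\cdot\alpha$ denotes the natural action of the bundle curvature via the adjoint representation. Both stated identities then follow by combining this formula with the orthogonal splitting $\Om^2 = \Om^{2,+}\oplus\Om^{2,-}$, together with the pointwise relations $d_A^{\ast} = -\ast d_A\ast$ on a $4$-manifold and $d_A^2\beta = [F_A\wedge\beta]$.

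For the first identity I would write $d_A = d_A^+ + d_A^-$ on $\Om^1(X,\mathfrak{g}_P)$, note that $d_A^{\pm,\ast}$ coincides with $d_A^{\ast}$ on $\Om^{2,\pm}$, and compute
$$d_A^{\pm,\ast} d_A^{\pm}\alpha = \tfrac{1}{2}d_A^{\ast}d_A\alpha \mp \tfrac{1}{2}\ast[F_A\wedge\alpha]$$
using $d_A^{\ast} = -\ast d_A\ast$ together with $**=1$ on $\Om^2$ and $d_A^2\alpha = [F_A\wedge\alpha]$. Subtracting gives $d_A^{-,\ast}d_A^- - d_A^{+,\ast}d_A^+ = \ast[F_A\wedge\cdot]$, whence
$$2d_A^{-,\ast}d_A^- + d_A d_A^{\ast} = (d_A^{\ast}d_A + d_A d_A^{\ast}) + \ast[F_A\wedge\cdot].$$
A pointwise computation in an orthonormal frame now identifies $F_A\cdot\alpha + \ast[F_A\wedge\alpha]$ with exactly $2\ast[F_A^+,\alpha]$: writing $F_A = F_A^+ + F_A^-$ and using the explicit self-dual basis one finds that the $F_A^-$ contributions from the two terms cancel while the $F_A^+$ contributions add. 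Substituting $\mathcal{R}^1 = Ric$ yields the first identity.

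For the second identity I would first reduce the full twisted Laplacian to $2 d_A^+ d_A^{+,\ast}$ on $\Om^{2,+}$. Since $d_A^{-,\ast}$ vanishes on $\Om^{2,+}$, splitting $d_A$ gives
$$d_A d_A^{\ast}\omega^+ = d_A^+ d_A^{+,\ast}\omega^+ + d_A^- d_A^{+,\ast}\omega^+.$$
On the other hand, $\ast\omega^+ = \omega^+$ combined with $d_A^{\ast} = -\ast d_A\ast$ yields $d_A^{\ast}\omega^+ = -\ast d_A\omega^+$, and a second application (together with $d_A^{\ast}(\ast\eta) = -\ast d_A\eta$ on $\Om^1$) leads to
$$d_A^{\ast}d_A\omega^+ = \ast d_A d_A^{+,\ast}\omega^+ = d_A^+ d_A^{+,\ast}\omega^+ - d_A^- d_A^{+,\ast}\omega^+.$$
Adding these two displays gives $(d_A d_A^{\ast}+d_A^{\ast}d_A)\omega^+ = 2 d_A^+ d_A^{+,\ast}\omega^+$. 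The $\Om^2$ Weitzenb\"{o}ck formula now applies: the Singer--Thorpe decomposition gives $\mathcal{R}^2|_{\Om^{2,+}} = \tfrac{1}{3}R - 2\w^+$, and a short index calculation shows that the bundle-curvature action of $F_A$ restricted to self-dual $2$-forms collapses to the anticommutator $\{F_A^+,\cdot\}$; this delivers the second identity.

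The main obstacle is the pointwise identification of the bundle-curvature term in each degree: in the first identity, why $F_A\cdot\alpha + \ast[F_A\wedge\alpha] = 2\ast[F_A^+,\alpha]$, and in the second, why the $\Om^2$ bundle-curvature action restricted to $\Om^{2,+}$ reduces to $\{F_A^+,\cdot\}$. Both reductions rely on the algebraic identity $\ast\omega = \omega$ for $\omega\in\Om^{2,+}$ and the explicit form of the wedge-bracket action of $F_A = F_A^+ + F_A^-$, with the anti-self-dual part contributing with opposite sign so as to cancel on the right-hand side of each identity.
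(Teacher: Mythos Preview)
The paper does not prove this theorem at all: it simply recalls the two Weitzenb\"{o}ck formulas with a citation to Bourguignon--Lawson and to \cite[Equs.~(6.25) and (6.26)]{Freed/Uhlenbeck}, and then uses them as black boxes. So there is no ``paper's own proof'' to compare against.

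Your sketch is the standard derivation of these identities and is correct in outline. The reduction of $2d_A^{-,\ast}d_A^{-}+d_Ad_A^{\ast}$ to the Hodge Laplacian plus $\ast[F_A\wedge\cdot]$ on $\Om^{1}$, via $d_A^{\pm,\ast}d_A^{\pm}=\tfrac12 d_A^{\ast}d_A\mp\tfrac12\ast[F_A\wedge\cdot]$, is right; so is the reduction $(d_Ad_A^{\ast}+d_A^{\ast}d_A)\omega^{+}=2d_A^{+}d_A^{+,\ast}\omega^{+}$ on $\Om^{2,+}$, obtained exactly as you indicate from $\ast\omega^{+}=\omega^{+}$ and $d_A^{\ast}=-\ast d_A\ast$. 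The remaining pointwise identifications---that the bundle-curvature contribution collapses to $2\ast[F_A^{+},\cdot]$ on $\Om^{1}$ and to the anticommutator $\{F_A^{+},\cdot\}$ on $\Om^{2,+}$, and that the Riemannian piece on $\Om^{2,+}$ is governed by $\tfrac13 R$ and $\mathcal{W}^{+}$ via the Singer--Thorpe decomposition---are exactly the local computations carried out in the cited references, so you have correctly located where the real work lies. One small caveat: in the second formula the symbol $\w^{+}$ should be read as the self-dual Weyl operator $\mathcal{W}^{+}$ acting on $\Om^{2,+}$ (as in Freed--Uhlenbeck), not as the scalar ``largest eigenvalue'' introduced later in the paper's Remark; with that reading your Singer--Thorpe argument goes through verbatim.
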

\begin{proposition}
	If the pair $(A,\phi)$ is a solution of the complex Yang-Mills equations over a four-dimensional manifold $X$, then the pair $(A,\phi)$ also satisfies the following equations
	\begin{subequations}\label{E3}
		\begin{numcases}{}
	d_{A}^{\ast}(F_{A}-\phi\wedge\phi)^{+}-\ast[\phi,(d_{A}\phi)^{-}]=0,\\
	d_{A}^{\ast}(d_{A}\phi)^{-}-\ast[\phi,(F_{A}-\phi\wedge\phi)^{+}]=0.
	\end{numcases}
\end{subequations}
\end{proposition}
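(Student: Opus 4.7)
The plan is to derive the first-order system \eqref{E3} from the second-order complex Yang-Mills equations \eqref{E1.1} by combining the Bianchi identity for the complex connection $\mathcal{A} = A + \sqrt{-1}\phi$ with the self-dual/anti-self-dual decomposition of 2-forms on a four-manifold. The argument is purely algebraic once the ingredients are in place; no analytic estimate is needed.

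First I would record the complex Bianchi identity $d_\mathcal{A}\mathcal{F}_\mathcal{A}=0$. Writing $d_\mathcal{A} = d_A + \sqrt{-1}[\phi,\cdot\,]$ and $\mathcal{F}_\mathcal{A} = (F_A - \phi\wedge\phi) + \sqrt{-1}\,d_A\phi$ and separating real and imaginary parts produces the two 3-form identities
$$d_A(F_A - \phi\wedge\phi) = [\phi, d_A\phi], \qquad d_A d_A\phi + [\phi, F_A - \phi\wedge\phi]=0,$$
both of which are automatic consequences of $d_A F_A = 0$, the formula $d_A^2 = [F_A,\cdot\,]$, and the Jacobi identity $[\phi,[\phi,\phi]]=0$. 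No use of the Euler-Lagrange equations is made here.

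Next I would invoke the elementary four-dimensional identity that, for a $\mathfrak{g}_P$-valued $2$-form $\omega = \omega^+ + \omega^-$, the formal adjoint satisfies $d_A^*\omega^+ + d_A^*\omega^- = d_A^*\omega$ together with $d_A^*\omega^+ - d_A^*\omega^- = \mp\ast d_A\omega$ (the sign determined by the convention $d^*=\mp\ast d\ast$ on $2$-forms). Taking $\omega = F_A - \phi\wedge\phi$, I would substitute for $d_A^*\omega$ from the real part of \eqref{E1.1} and for $d_A\omega$ from the real Bianchi identity derived above; adding the resulting two relations and using the algebraic identity $d_A\phi\pm \ast d_A\phi = 2(d_A\phi)^\pm$ collapses the right-hand side into a single bracket involving $(d_A\phi)^-$, which is precisely the first equation of \eqref{E3}. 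The second equation is obtained in exactly the same way with $\omega = d_A\phi$, using the imaginary parts of \eqref{E1.1} and of the Bianchi identity.

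There is no hard step; the proposition is a bookkeeping exercise. The only point requiring care is keeping the various sign conventions (graded Lie bracket on $\mathfrak{g}_P$-valued forms, Hodge-star eigenvalues $\pm 1$ on $\Omega^{2,\pm}$, and $d^*=\pm\ast d\ast$ in dimension four) consistent throughout the combination; once that is done, the two equations in \eqref{E3} drop out immediately.
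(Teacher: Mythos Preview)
Your proposal is correct and follows essentially the same route as the paper. The paper packages the computation more compactly by working with the complex curvature throughout: it defines $\mathcal{F}^{\pm}_{\mathcal{A}}=\tfrac{1}{2}(\mathcal{F}_{\mathcal{A}}\pm\ast\overline{\mathcal{F}_{\mathcal{A}}})$, observes that the Bianchi identity $d_{\mathcal{A}}\mathcal{F}_{\mathcal{A}}=0$ together with the Yang--Mills equation $d_{\mathcal{A}}^{\ast_{\C}}\mathcal{F}_{\mathcal{A}}=0$ give $d_{\mathcal{A}}\mathcal{F}^{+}_{\mathcal{A}}=0$, and then reads off \eqref{E3} as the real and imaginary parts. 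Your version simply reverses the order---separating real and imaginary parts first, then using the identity $2d_{A}^{\ast}\omega^{\pm}=d_{A}^{\ast}\omega\mp\ast d_{A}\omega$ to isolate the self-dual pieces---but the content is identical.
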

\begin{proof}
	We  decompose the curvature $\mathcal{F}_{\mathcal{A}}$ of connection $\mathcal{A}$ to
	$$\mathcal{F}^{+}_{\mathcal{A}}:=\frac{1}{2}(\mathcal{F}_{\mathcal{A}}+\ast\overline{\mathcal{F}_{\mathcal{A}}})=\frac{1}{2}(F_{A}-\phi\wedge\phi)^{+}+\sqrt{-1}(d_{A}\phi)^{-}$$
	and
	$$\mathcal{F}^{-}_{\mathcal{A}}:=\frac{1}{2}(\mathcal{F}_{\mathcal{A}}-\ast\overline{\mathcal{F}_{\mathcal{A}}})=\frac{1}{2}(F_{A}-\phi\wedge\phi)^{-}+\sqrt{-1}(d_{A}\phi)^{+}$$ 
	Since the $\mathcal{F}_{\mathcal{A}}$ satisfies complex Yang-Mills equations (\ref{E9}) and $Bianchi$  $identity$,  we have
	$$d_{\mathcal{A}}\mathcal{F}^{+}_{\mathcal{A}}=d_{\mathcal{A}}\mathcal{F}^{-}_{\mathcal{A}}=0.$$
	We  complete the proof of this Proposition.
\end{proof}
\begin{proposition}(\cite[Theorem 4.3]{Gagliardo/Uhlenbeck:2012} )
	If the pair $(A,\phi)$ is a solution of the complex Yang-Mills equations, then
	$$\na^{\ast}_{A}\na_{A}\phi+Ric\circ\phi+\ast[\ast(\phi\wedge\phi),\phi]=0.$$
	If $X$ is a closed manifold,
	\begin{equation}\label{E8}
	\|\na_{A}\phi\|^{2}_{L^{2}(X)}+\langle Ric\circ\phi,\phi\rangle_{L^{2}(X)}+2\|\phi\wedge\phi\|^{2}_{L^{2}(X)}=0.
	\end{equation}
\end{proposition}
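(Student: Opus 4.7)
The strategy is to combine the second complex Yang--Mills equation with the moment map condition $d_A^*\phi = 0$ and the Bochner--Weitzenb\"ock formula on $\mathfrak{g}_P$-valued $1$-forms. The moment map condition forces $d_A d_A^*\phi = 0$, so the full Hodge Laplacian on the $1$-form $\phi$ reduces to $d_A^*d_A\phi$. The second complex Yang--Mills equation, specialized to $n=4$, then reads
$$d_A^*d_A\phi \;=\; -\ast[\phi,\ast F_A] + \ast[\phi,\ast(\phi\wedge\phi)].$$
On the other hand, the standard Weitzenb\"ock identity on $\Omega^1(X,\mathfrak{g}_P)$, which one derives by commuting covariant derivatives in an orthonormal frame (the bundle-curvature contribution having components $\sum_{j}[F_{ji},\phi_{j}]$), takes the form
$$(d_A^*d_A + d_A d_A^*)\phi \;=\; \nabla_A^*\nabla_A\phi + Ric\circ\phi - \ast[\phi,\ast F_A].$$
A short frame-level check confirms that the $1$-form $-\ast[\phi,\ast F_A]$ has components exactly $\sum_{j}[F_{ji},\phi_{j}]$. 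Equating the two expressions for $d_A^*d_A\phi$, the $F_A$-brackets cancel, and after using the graded commutativity $[\phi,\ast(\phi\wedge\phi)] = -[\ast(\phi\wedge\phi),\phi]$ (valid because $\phi$ is a $1$-form and $\ast(\phi\wedge\phi)$ is a $2$-form) one arrives at the claimed pointwise identity.

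For the integrated identity on the closed manifold $X$, I would pair with $\phi$ in $L^2$ and integrate. Integration by parts (no boundary contribution, as $X$ is closed) gives $\langle \nabla_A^*\nabla_A\phi,\phi\rangle_{L^2} = \|\nabla_A\phi\|_{L^2}^2$, and the Ricci term is immediate. The remaining bracket term produces $2\|\phi\wedge\phi\|_{L^2}^2$ via the pointwise algebraic identity
$$\langle \ast[\ast(\phi\wedge\phi),\phi],\phi\rangle \;=\; 2|\phi\wedge\phi|_{\mathfrak{g}}^2,$$
which can be verified by a direct orthonormal-frame computation: in the model case $\phi = X\theta^{1} + Y\theta^{2}$ both sides reduce to $2|[X,Y]|_{\mathfrak{g}}^{2}$, and the general case follows by multilinearity combined with the ad-invariance of the Killing form $\langle[A,B],C\rangle_{\mathfrak{g}} = \langle A,[B,C]\rangle_{\mathfrak{g}}$.

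The main technical obstacle is producing the Weitzenb\"ock curvature term in exactly the shape $-\ast[\phi,\ast F_A]$ (as opposed, for instance, to $\ast[F_A,\phi]$, which is a genuinely \emph{different} $1$-form) so that it cancels cleanly against the $F_A$-bracket coming from the Yang--Mills equation. This is a short but sign-sensitive orthonormal-frame calculation; once this identity is in place, the cancellation, the graded-commutativity rearrangement, and the passage from the pointwise to the integrated form on closed $X$ are essentially mechanical.
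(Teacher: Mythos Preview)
The paper does not supply its own proof of this proposition; it is quoted verbatim from Gagliardo--Uhlenbeck \cite[Theorem~4.3]{Gagliardo/Uhlenbeck:2012} and used as a black box. So there is nothing in the paper to compare against line by line.

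That said, your argument is the standard one and is essentially what Gagliardo--Uhlenbeck do: combine the moment map condition $d_A^{\ast}\phi=0$, the second complex Yang--Mills equation, and the Bochner--Weitzenb\"ock formula for $\mathfrak{g}_P$-valued $1$-forms so that the $F_A$-bracket terms cancel, leaving the connection Laplacian, the Ricci term, and the quadratic $\phi$-bracket. Your identification of the sign check on the curvature term as the only delicate point is accurate, and your model computation $\phi=X\theta^{1}+Y\theta^{2}$ together with ad-invariance correctly produces the factor $2$ in the integrated identity. One small remark: you specialize to $n=4$, but the proposition as stated (and as proved in \cite{Gagliardo/Uhlenbeck:2012}) holds in any dimension; the $(-1)^{n}$ in equation~(1.1b) and the dimension-dependent sign hidden in the Hodge-star expression of the Weitzenb\"ock curvature term cancel uniformly, so the same argument goes through without restriction on $n$.
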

We have an Energy identity of the pair $(A,\phi)$ of a solution of complex Yang-Mills equations, one also can see \cite{HuangLMP}.
\begin{proposition}(Energy Identity)\label{P4}
	If the pair $(A,\phi)$ is a solution of the complex Yang-Mills equations, then
	$$YM_{\mathbb{C}}(A,\phi)=\|F_{A}\|^{2}_{L^{2}(X)}-\|\phi\wedge\phi\|^{2}_{L^{2}(X)}.$$
	In particular, if $X$ is a four-dimensional manifold,
	\begin{equation*}
	\begin{split}
	YM_{\mathbb{C}}(A,\phi)&=2\|\mathcal{F}^{+}_{\mathcal{A}}\|^{2}_{L^{2}(X)}-4\pi^{2}p_{1}(P)[X]\\
	&=2\|F_{A}^{+}\|^{2}_{L^{2}(X)}-2\|(\phi\wedge\phi)^{+}\|^{2}_{L^{2}(X)}-4\pi^{2}p_{1}(P)[X].\\
	\end{split}
	\end{equation*}
\end{proposition}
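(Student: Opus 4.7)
The plan is to work directly from the definition $YM_{\C}(A,\phi)=\tfrac{1}{2}\|\mathcal{F}_{\mathcal{A}}\|_{L^{2}(X)}^{2}$ and to reduce the awkward piece $\|d_{A}\phi\|_{L^{2}(X)}^{2}$ to curvature/bracket terms by coupling a Weitzenb\"ock formula for $1$-forms with the Bochner identity (\ref{E8}) and the moment map condition $d_{A}^{\ast}\phi=0$. Expanding the definition along the real/imaginary decomposition of $\mathcal{F}_{\mathcal{A}}$ gives
\begin{equation*}
2\,YM_{\C}(A,\phi) \;=\; \|F_{A}\|_{L^{2}}^{2}\;-\;2\langle F_{A},\phi\wedge\phi\rangle_{L^{2}}\;+\;\|\phi\wedge\phi\|_{L^{2}}^{2}\;+\;\|d_{A}\phi\|_{L^{2}}^{2},
\end{equation*}
so the entire content of the identity is to evaluate $\|d_{A}\phi\|_{L^{2}}^{2}$.

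First I would apply the standard Weitzenb\"ock formula on $\Om^{1}(X,\mathfrak{g}_{P})$, $d_{A}^{\ast}d_{A}+d_{A}d_{A}^{\ast}=\nabla_{A}^{\ast}\nabla_{A}+\mathrm{Ric}+\mathcal{R}_{F_{A}}$, where $\mathcal{R}_{F_{A}}$ is the pointwise curvature operator. Pairing with $\phi$, using $d_{A}^{\ast}\phi=0$, and recognising the invariant contraction $\langle\mathcal{R}_{F_{A}}\phi,\phi\rangle_{L^{2}}=2\langle F_{A},\phi\wedge\phi\rangle_{L^{2}}$ (which comes from $\langle[F_{A},\phi],\phi\rangle=\langle F_{A},[\phi,\phi]\rangle$ and $[\phi,\phi]=2\phi\wedge\phi$), I obtain
\begin{equation*}
\|d_{A}\phi\|_{L^{2}}^{2}=\|\nabla_{A}\phi\|_{L^{2}}^{2}+\langle \mathrm{Ric}\circ\phi,\phi\rangle_{L^{2}}+2\langle F_{A},\phi\wedge\phi\rangle_{L^{2}}.
\end{equation*}
Substituting the Bochner identity (\ref{E8}), which eliminates the first two terms in favour of $-2\|\phi\wedge\phi\|_{L^{2}}^{2}$, produces $\|d_{A}\phi\|_{L^{2}}^{2}=-2\|\phi\wedge\phi\|_{L^{2}}^{2}+2\langle F_{A},\phi\wedge\phi\rangle_{L^{2}}$. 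Plugging this into the expansion above, the cross term $-2\langle F_{A},\phi\wedge\phi\rangle_{L^{2}}$ cancels exactly and the identity $YM_{\C}(A,\phi)=\|F_{A}\|_{L^{2}}^{2}-\|\phi\wedge\phi\|_{L^{2}}^{2}$ drops out.

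For the four-dimensional refinement I would invoke the Chern-Weil identity (\ref{E11}) established in the previous proposition, together with the decomposition already derived in the excerpt, $\|\mathcal{F}_{\mathcal{A}}\|_{L^{2}}^{2}=2\|\mathcal{F}_{\mathcal{A}}^{+}\|_{L^{2}}^{2}-4\pi^{2}p_{1}(P)[X]$. Since $YM_{\C}=\tfrac{1}{2}\|\mathcal{F}_{\mathcal{A}}\|_{L^{2}}^{2}$, this yields the first four-dimensional formula up to convention. For the cleaner form in $F_{A}^{+}$ and $(\phi\wedge\phi)^{+}$, I use the pointwise orthogonal decomposition $\mathcal{F}_{\mathcal{A}}^{+}=(F_{A}-\phi\wedge\phi)^{+}+\sqrt{-1}(d_{A}\phi)^{-}$, so that $\|\mathcal{F}_{\mathcal{A}}^{+}\|_{L^{2}}^{2}=\|(F_{A}-\phi\wedge\phi)^{+}\|_{L^{2}}^{2}+\|(d_{A}\phi)^{-}\|_{L^{2}}^{2}$; to collapse this into $\|F_{A}^{+}\|_{L^{2}}^{2}-\|(\phi\wedge\phi)^{+}\|_{L^{2}}^{2}$ one integrates by parts using equation (\ref{E3}b) (pair $d_{A}^{\ast}(d_{A}\phi)^{-}=\ast[\phi,(F_{A}-\phi\wedge\phi)^{+}]$ with $\phi$) to rewrite $\|(d_{A}\phi)^{-}\|_{L^{2}}^{2}$ as $2\langle (F_{A}-\phi\wedge\phi)^{+},(\phi\wedge\phi)^{+}\rangle_{L^{2}}$, and the cross terms telescope.

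The main obstacle I anticipate is bookkeeping: the Weitzenb\"ock curvature term $\mathcal{R}_{F_{A}}$ acting on $1$-forms must be identified as precisely $2\langle F_{A},\phi\wedge\phi\rangle$, which requires a careful index calculation invoking invariance of the Killing form; similarly, the four-dimensional collapse needs the right pairing to identify $\|(d_{A}\phi)^{-}\|_{L^{2}}^{2}$ via (\ref{E3}), along with a consistent self-dual/anti-self-dual orthogonality check to eliminate the cross-terms $\langle F_{A}^{+},(\phi\wedge\phi)^{+}\rangle$. All other steps are bookkeeping of the expansion of $\|F_{A}-\phi\wedge\phi\|_{L^{2}}^{2}$ and the already-derived self-dual identity.
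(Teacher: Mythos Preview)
The paper does not actually prove Proposition~\ref{P4}; it is stated without proof and referenced to \cite{HuangLMP}. So there is no in-paper argument to compare against, and your plan stands on its own merits.

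Your route is correct but somewhat roundabout. You combine the Weitzenb\"ock formula on $\Om^{1}(X,\mathfrak{g}_{P})$ with the Bochner identity (\ref{E8}); but recall that (\ref{E8}) is itself obtained (in \cite{Gagliardo/Uhlenbeck:2012}) by applying that same Weitzenb\"ock formula to the second Euler--Lagrange equation (1.1b) together with $d_{A}^{\ast}\phi=0$. In effect you apply Weitzenb\"ock twice and the two applications cancel, leaving what you would get by pairing (1.1b) directly with $\phi$:
\[
\|d_{A}\phi\|_{L^{2}}^{2}=\big\langle -(-1)^{n}\ast[\phi,\ast(F_{A}-\phi\wedge\phi)],\phi\big\rangle_{L^{2}}
=2\langle F_{A}-\phi\wedge\phi,\ \phi\wedge\phi\rangle_{L^{2}},
\]
where the last equality is exactly the ad-invariance identification $\langle[\phi,\cdot],\phi\rangle\leftrightarrow\langle\cdot,[\phi,\phi]\rangle$ you already isolate as the main bookkeeping step. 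Substituting this into the expansion of $\|F_{A}-\phi\wedge\phi\|^{2}+\|d_{A}\phi\|^{2}$ gives the energy identity immediately. This bypasses the Ricci term entirely and avoids the double use of Weitzenb\"ock.

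Two minor remarks. First, your computation actually yields $2\,YM_{\C}(A,\phi)=\|F_{A}\|^{2}-\|\phi\wedge\phi\|^{2}$, matching the definition $YM_{\C}=\tfrac12\|\mathcal{F}_{\mathcal{A}}\|^{2}$; the missing factor of $2$ is an inconsistency in the paper's statement, not in your argument. Second, your four-dimensional reduction is exactly right: the identity $\|(d_{A}\phi)^{-}\|_{L^{2}}^{2}=2\langle(F_{A}-\phi\wedge\phi)^{+},(\phi\wedge\phi)^{+}\rangle_{L^{2}}$ obtained from (\ref{E3}b) is in fact used verbatim later in the paper (proof of Proposition~\ref{P2}), so your telescoping computation $\|(F_{A}-\phi\wedge\phi)^{+}\|^{2}+\|(d_{A}\phi)^{-}\|^{2}=\|F_{A}^{+}\|^{2}-\|(\phi\wedge\phi)^{+}\|^{2}$ is the intended one.
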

As one application of the maximum principle, Gagliardo-Uhlenbeck obtain an a $L^{\infty}$ priori estimate for the extra fields \cite[Corollary 4.6]{Gagliardo/Uhlenbeck:2012}.
\begin{theorem}
	If the pair $(A,\phi)$ is a smooth solution of complex Yang-Mills equations, then
	$$\|\phi\|_{L^{\infty}(X)}\leq C\|\phi\|_{L^{2}(X)},$$
	where $C=C(g)$ is a positive constant only dependent on $g$.
\end{theorem}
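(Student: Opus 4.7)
The plan is to produce a pointwise differential inequality for $u:=|\phi|^{2}$ of the form $\Delta u\geq -C(g)\,u$ (with $\Delta$ the Laplace--Beltrami operator) and then pass to an $L^{\infty}$ bound by the standard Moser iteration / mean value inequality for subsolutions on the closed manifold $X$. The nonlinear term $*[*(\phi\wedge\phi),\phi]$ turns out to contribute with a favourable sign, so it is discarded for free; only the Ricci term has to be absorbed, and it is controlled by $|\mathrm{Ric}|_{L^{\infty}(g)}$.

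First I would take the pointwise inner product with $\phi$ of the Weitzenb\"ock equation established in the preceding proposition,
\begin{equation}\nonumber
\nabla_{A}^{\ast}\nabla_{A}\phi+\mathrm{Ric}\circ\phi+\ast[\ast(\phi\wedge\phi),\phi]=0,
\end{equation}
and use the elementary identity $\langle\nabla_{A}^{\ast}\nabla_{A}\phi,\phi\rangle=|\nabla_{A}\phi|^{2}-\tfrac{1}{2}\Delta|\phi|^{2}$. In parallel, I would verify the pointwise commutator identity
\begin{equation}\nonumber
\langle\ast[\ast(\phi\wedge\phi),\phi],\phi\rangle=2|\phi\wedge\phi|^{2},
\end{equation}
which is the local companion of the integrated identity (\ref{E8}) and follows from the $\mathrm{ad}$-invariance of the inner product on $\mathfrak{g}$. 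Combining these gives
\begin{equation}\nonumber
\tfrac{1}{2}\Delta|\phi|^{2}=|\nabla_{A}\phi|^{2}+\langle\mathrm{Ric}\circ\phi,\phi\rangle+2|\phi\wedge\phi|^{2}.
\end{equation}
Dropping the two non-negative terms $|\nabla_{A}\phi|^{2}$ and $2|\phi\wedge\phi|^{2}$ and estimating $|\langle\mathrm{Ric}\circ\phi,\phi\rangle|\leq|\mathrm{Ric}|_{L^{\infty}(g)}|\phi|^{2}$ yields the desired
\begin{equation}\nonumber
\Delta|\phi|^{2}\geq -2|\mathrm{Ric}|_{L^{\infty}(g)}|\phi|^{2}=:-C(g)|\phi|^{2}.
\end{equation}

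Next I would treat $u:=|\phi|^{2}\in C^{\infty}(X)$ as a non-negative classical subsolution of the linear elliptic inequality $-\Delta u\leq C(g)\,u$ on the closed manifold $X$, and run a standard Moser iteration: multiplying by $u^{p-1}$ with $p\geq 1$, integrating by parts, and applying the Sobolev embedding $W^{1,2}(X)\hookrightarrow L^{4}(X)$ in dimension four yields $\|u\|_{L^{2p}(X)}\leq K(p)\|u\|_{L^{p}(X)}$ with explicit $K(p)$, and iterating $p\mapsto 2p$ and taking $p\to\infty$ produces a constant $C'=C'(g)$ with $\|u\|_{L^{\infty}(X)}\leq C'\|u\|_{L^{1}(X)}$. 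Substituting $u=|\phi|^{2}$ gives $\|\phi\|_{L^{\infty}(X)}^{2}\leq C'\|\phi\|_{L^{2}(X)}^{2}$, which is the claimed bound with $C:=\sqrt{C'}=C(g)$.

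The only delicate step is the pointwise identity $\langle\ast[\ast(\phi\wedge\phi),\phi],\phi\rangle=2|\phi\wedge\phi|^{2}$; everything else is a textbook application of the maximum principle plus Moser iteration. This identity can be checked by expanding $\phi$ in a local orthonormal frame and using the $\mathrm{ad}$-invariance of the Killing form (i.e.\ $B([\xi,\eta],\zeta)=B(\xi,[\eta,\zeta])$), and its integrated form is precisely the content of (\ref{E8}), which provides a useful consistency check. Once this sign is secured, the constants $|\mathrm{Ric}|_{L^{\infty}}$ and the Sobolev constants of $(X,g)$ depend only on the Riemannian metric $g$, so the final constant $C$ in the estimate depends only on $g$, as required.
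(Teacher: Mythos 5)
Your proof is correct and follows essentially the route the paper points to: the paper itself gives no proof but cites Gagliardo--Uhlenbeck's maximum-principle argument, which likewise takes the pointwise inner product of the Weitzenb\"ock-type equation $\na^{\ast}_{A}\na_{A}\phi+Ric\circ\phi+\ast[\ast(\phi\wedge\phi),\phi]=0$ with $\phi$ (your identity $\langle\ast[\ast(\phi\wedge\phi),\phi],\phi\rangle=2|\phi\wedge\phi|^{2}$ is exactly the pointwise form of (\ref{E8})) to get $\Delta|\phi|^{2}\geq -C(g)|\phi|^{2}$ and then invokes standard linear elliptic theory for nonnegative subsolutions. Replacing the maximum-principle/Green-function step by Moser iteration is an immaterial variation, so your argument is a faithful reconstruction of the cited proof.
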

\section{A priori estimates of Complex Yang-Mills equations}
\subsection{The least eigenvalue of $d_{A}^{+}d_{A}^{+,\ast}$ when $F_{A}^{+}$ is $L^{2}$ small}
Consider the open neighborhood in $\mathcal{B}(P,g):=\mathcal{A}_{P}/\mathcal{G}_{P}$ which is the affine space, $\mathcal{A}_{P}$, of connection on $P$ moduli the gauge transformations, $\mathcal{G}_{P}$, of the principal $G$-bundle, of the finite-dimensional subvariety 
$$\mathcal{M}(P,g):=\{[A]\in\mathcal{B}(P,g):F_{A}^{+}=0\}$$
defined by 
$$\mathcal{B}_{\varepsilon}(P,g):=\{[A]\in\mathcal{B}(P,g):\|F_{A}^{+}\|_{L^{2}(X)}<\varepsilon\}.$$
The operator $d_{A}^{+}d_{A}^{\ast,+}$ is an elliptic self-dual operator on the space of square integrable sections of $\Om^{2,+}(X,\mathfrak{g}_{P})$. It is a standard result that the spectrum of $d_{A}^{+}d_{A}^{\ast,+}$ is discrete, and the lowest eigenvalue is nonnegative \cite[Page 146]{Taubes1982}. First recall the Definition of the least eigenvalue of $d_{A}^{+}d_{A}^{+,\ast}$,  \cite[Definition 3.1]{Taubes1982} ,
\begin{definition}\label{D2}
	For $A\in \mathcal{A}_{P}$, define
	\begin{equation}\label{E1}
	\mu_{g}(A):=\inf_{v\in\Om^{+}(\mathfrak{g}_{P})\backslash\{0\}}\frac{\|d^{+,\ast}_{A}v\|^{2}}{\|v\|^{2}}.
	\end{equation}
	is the lowest eigenvalue of $d_{A}^{+}d_{A}^{+,\ast}$.
\end{definition}
When the metric $g$ is positive in the sense of \cite[Definition 3.1]{Feehan2014.12} , then  $\mu_{g}(A)$ has a positive lower bound with respect to $[A]\in\mathcal{B}_{\varepsilon}(P,g)$. In \cite[Section 3]{Feehan2014.12} , the author proved $\mu_{g}(A)$ has a positive lower bound with respect to $[A]\in\mathcal{B}_{\varepsilon}(P,g)$ under the Riemannian metric $g$ is $good$.
\begin{theorem}\label{T4}(\cite[Theorem 3.8]{Feehan2014.12} )
	Let $X$ be a closed, oriented, four-dimensional, smooth Riemannian manifold with smooth Riemannian metric $g$, $P$ be a $G$-bundle over $X$ with $G$ being a compact Lie group. Assume that $g$ is $good$ in the sense of Definition \ref{D1}. Then there are positive constants $\mu=\mu(g,P)$ and $\varepsilon=\varepsilon(g,P)$ with following significance. If $A$ is a smooth connection on $P$ such that 
	\begin{equation}\nonumber
	\|F^{+}_{A}\|_{L^{2}(X)}\leq\varepsilon
	\end{equation}
	and $\mu_{g}(A)$ is as in (\ref{E1}),\ then
	$$\mu_{g}(A)\geq\mu.$$
\end{theorem}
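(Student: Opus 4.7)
The plan is to argue by contradiction using Uhlenbeck's compactness theorem. Suppose the conclusion fails: then there is a sequence of smooth connections $A_n$ on $P$ with $\|F_{A_n}^{+}\|_{L^{2}(X)}\to 0$ but $\mu_g(A_n)\to 0$. Choose $L^{2}$-normalized eigenforms $v_n\in\Om^{2,+}(X,\mathfrak{g}_P)$ realising the infimum in \eqref{E1}, so that $d_{A_n}^{+}d_{A_n}^{+,\ast}v_n=\mu_g(A_n)\,v_n$, $\|v_n\|_{L^{2}(X)}=1$ and $\|d_{A_n}^{+,\ast}v_n\|_{L^{2}(X)}^{2}=\mu_g(A_n)\to 0$.

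The Chern--Weil identity
$$
\|F_{A_n}\|_{L^{2}(X)}^{2}=2\|F_{A_n}^{+}\|_{L^{2}(X)}^{2}-4\pi^{2}p_{1}(P)[X]
$$
gives a uniform bound on $\|F_{A_n}\|_{L^{2}(X)}$ (the case $p_{1}(P)[X]>0$ is vacuous for small $\varepsilon$). Uhlenbeck's weak compactness and removable-singularities theorems then produce, after a subsequence and suitable gauge transformations $u_n$, a finite bubble set $\Sigma\subset X$, a principal $G$-bundle $\tilde P_{\infty}$ over $X$ with $\eta(\tilde P_{\infty})=\eta(P)$ and $p_{1}(\tilde P_{\infty})[X]\leq p_{1}(P)[X]\leq 0$, and a smooth connection $A_{\infty}$ on $\tilde P_{\infty}$ with $u_n^{\ast}A_n\to A_{\infty}$ weakly in $L^{2}_{1,\mathrm{loc}}(X\setminus\Sigma)$. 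Weak lower-semicontinuity forces $F_{A_{\infty}}^{+}=0$, so $A_{\infty}$ falls under the good-metric hypothesis of Definition \ref{D1} and $\operatorname{Coker}d_{A_{\infty}}^{+}=0$.

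To reach a contradiction I would pass $v_n$ to a nonzero limit. The Weitzenb\"ock formula on $\Om^{2,+}(X,\mathfrak{g}_P)$ stated in Section~2, combined with H\"older and the Sobolev embedding $L^{2}_{1}(X)\hookrightarrow L^{4}(X)$, yields
$$
\|\na_{A_n}v_n\|_{L^{2}}^{2}\leq 2\mu_g(A_n)+C_1\|v_n\|_{L^{2}}^{2}+C_2\|F_{A_n}^{+}\|_{L^{2}}\bigl(\|\na_{A_n}v_n\|_{L^{2}}^{2}+\|v_n\|_{L^{2}}^{2}\bigr),
$$
with $C_1,C_2=C(g)$. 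Choosing $\varepsilon>0$ so small that $C_2\varepsilon\leq \tfrac{1}{2}$ and absorbing the gradient term gives a uniform bound $\|\na_{A_n}v_n\|_{L^{2}}\leq C$. After applying $u_n$, Rellich compactness extracts a subsequence converging weakly in $L^{2}_1$ and strongly in $L^{2}_{\mathrm{loc}}$ on $X\setminus\Sigma$ to a limit $v_{\infty}\in L^{2}_{1}(X\setminus\Sigma,\Om^{2,+}\otimes\mathfrak{g}_{\tilde P_{\infty}})$ satisfying $d_{A_{\infty}}^{+,\ast}v_{\infty}=0$.

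The main obstacle is ruling out concentration of the $L^{2}$ mass of $v_n$ at $\Sigma$, which would make $v_{\infty}$ trivial. Here the uniform $L^{2}_{1}$ bound is decisive: by Sobolev $\|v_n\|_{L^{4}(X)}\leq C$ uniformly, hence by H\"older
$$
\int_{B_r(x)}|v_n|^{2}\,dvol_g\leq C\,r^{2}\|v_n\|_{L^{4}(X)}^{2}\to 0\quad\text{as }r\to 0,
$$
uniformly in $n$, so no $L^{2}$ mass escapes to $\Sigma$ and $\|v_{\infty}\|_{L^{2}(X)}=1$. Since $\Sigma$ is finite and $A_{\infty}$ is smooth on $X$, a standard removable-singularities argument for the elliptic equation $d_{A_{\infty}}^{+}d_{A_{\infty}}^{+,\ast}v_{\infty}=0$ promotes $v_{\infty}$ to a smooth nonzero section of $\Om^{2,+}\otimes\mathfrak{g}_{\tilde P_{\infty}}$ lying in $\operatorname{Ker}d_{A_{\infty}}^{+,\ast}=\operatorname{Coker}d_{A_{\infty}}^{+}$, contradicting $\operatorname{Coker}d_{A_{\infty}}^{+}=0$. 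This contradiction produces the desired uniform constant $\mu=\mu(g,P)>0$, with the threshold $\varepsilon$ determined by the Weitzenb\"ock absorption step together with the smallness required to carry out the Uhlenbeck-type compactness.
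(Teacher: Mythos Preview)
The paper does not give its own proof of this theorem: it is quoted verbatim as \cite[Theorem 3.8]{Feehan2014.12} and used as a black box. Your proposal reconstructs precisely the compactness-and-contradiction argument that Feehan uses in that reference: Uhlenbeck compactness with bubbling for the sequence $A_n$, a uniform $L^{2}_{1}$ bound on the least-eigenvalue eigenforms $v_n$ via the Weitzenb\"ock formula (2.2b), non-concentration of the $L^{2}$ mass of $v_n$ at the bubble set via the Sobolev $L^{4}$ bound, and finally extension of the limit $v_{\infty}$ across $\Sigma$ to produce a nonzero element of $\operatorname{Coker}d_{A_{\infty}}^{+}$ on the (possibly different) limit bundle $\tilde P_{\infty}$, contradicting the good-metric hypothesis. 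So your argument is correct and is the same as Feehan's; there is nothing to compare against in the present paper itself.
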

\subsection{A solution of complex Yang-Mills equations when $F_{A}^{+}$ is $L^{2}$ small}
We construct the following useful a priori estimate. One also can see \cite[Lemma 5.2]{Taubes1982}  or \cite{Taubes1984}. 
\begin{lemma}\label{L2}
	Let $X$ be a closed, oriented, four-dimensional, smooth Riemannian manifold with smooth Riemannan metric $g$. Then there are positive constants, $C=C(g)$ and $\varepsilon=\varepsilon(g)$ with the following significance. If $G$ is a compact Lie group, $A$ is a smooth connection on a principal $G$-bundle $P$ over $X$ with 
	\begin{equation}\label{E4}
	\|F_{A}^{+}\|_{L^{2}(X)}\leq\varepsilon
	\end{equation}
	and $\nu\in\Om^{+}(\mathfrak{g}_{P})$, then
	\begin{equation}\label{E2}
	\|\nu\|^{2}_{L^{4}(X)}\leq C(\|d_{A}^{+,\ast}\nu\|^{2}_{L^{2}(X)}+\|\nu\|^{2}_{L^{2}(X)})
	\end{equation}
\end{lemma}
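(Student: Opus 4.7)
The plan is to test the Weitzenböck identity for $2d_{A}^{+}d_{A}^{+,\ast}$ on $\Omega^{2,+}(X,\mathfrak{g}_{P})$ against $\nu$, use a Kato/Sobolev inequality to upgrade the resulting $L^{2}$ bound on $\nabla_{A}\nu$ to an $L^{4}$ bound on $\nu$, and absorb the curvature term on the left by exploiting the smallness hypothesis $\|F_{A}^{+}\|_{L^{2}(X)} \leq \varepsilon$.

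Concretely, I would first pair the Weitzenböck formula
$$
2\,d_{A}^{+}d_{A}^{+,\ast}\nu \;=\; \nabla_{A}^{\ast}\nabla_{A}\nu + \bigl(\tfrac{1}{3}R-2\omega^{+}\bigr)\nu + \{F_{A}^{+},\nu\}
$$
with $\nu$ in $L^{2}(X)$ and integrate by parts, to obtain
\begin{equation*}
\|\nabla_{A}\nu\|_{L^{2}(X)}^{2} \;\leq\; 2\|d_{A}^{+,\ast}\nu\|_{L^{2}(X)}^{2} + C_{1}(g)\|\nu\|_{L^{2}(X)}^{2} + \bigl|\langle \{F_{A}^{+},\nu\},\nu\rangle_{L^{2}(X)}\bigr|,
\end{equation*}
where the constant $C_{1}(g)$ comes from an $L^{\infty}$ bound on $\tfrac{1}{3}R-2\omega^{+}$. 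Next, apply Hölder's inequality to the cubic term together with the smallness hypothesis to obtain
$$
\bigl|\langle \{F_{A}^{+},\nu\},\nu\rangle_{L^{2}(X)}\bigr| \;\leq\; C_{2}\|F_{A}^{+}\|_{L^{2}(X)}\|\nu\|_{L^{4}(X)}^{2} \;\leq\; C_{2}\varepsilon\,\|\nu\|_{L^{4}(X)}^{2}.
$$

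Then I would invoke the Kato inequality $|\nabla|\nu|| \leq |\nabla_{A}\nu|$ pointwise, followed by the Sobolev embedding $W^{1,2}(X) \hookrightarrow L^{4}(X)$, which is valid in dimension four on a closed manifold, to conclude
$$
\|\nu\|_{L^{4}(X)}^{2} \;\leq\; C_{3}(g)\bigl(\|\nabla_{A}\nu\|_{L^{2}(X)}^{2} + \|\nu\|_{L^{2}(X)}^{2}\bigr).
$$
Combining the three estimates gives
$$
\|\nu\|_{L^{4}(X)}^{2} \;\leq\; C_{3}\Bigl(2\|d_{A}^{+,\ast}\nu\|_{L^{2}(X)}^{2} + (C_{1}+1)\|\nu\|_{L^{2}(X)}^{2} + C_{2}\varepsilon\|\nu\|_{L^{4}(X)}^{2}\Bigr).
$$

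Finally, choosing $\varepsilon=\varepsilon(g)$ so small that $C_{3}C_{2}\varepsilon \leq \tfrac{1}{2}$, the $\|\nu\|_{L^{4}(X)}^{2}$ term can be absorbed into the left-hand side, yielding the desired inequality (\ref{E2}) with $C = C(g)$. The main technical subtlety is controlling the indefinite cubic pairing $\langle \{F_{A}^{+},\nu\},\nu\rangle$; it is crucial that this is handled by an $L^{2}\cdot L^{4}\cdot L^{4}$ Hölder split (rather than $L^{\infty}\cdot L^{2}\cdot L^{2}$), because no pointwise bound on $F_{A}^{+}$ is available, and only then does the smallness of $\|F_{A}^{+}\|_{L^{2}(X)}$ allow the absorption argument to close against the Sobolev embedding.
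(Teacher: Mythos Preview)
Your proof is correct and follows essentially the same route as the paper: pair the Weitzenb\"ock formula (2.2b) with $\nu$, bound the zeroth-order curvature term by $c\|\nu\|_{L^{2}}^{2}$, estimate the $F_{A}^{+}$ term via the $L^{2}\cdot L^{4}\cdot L^{4}$ H\"older split, apply the Sobolev inequality $\|\nu\|_{L^{4}}^{2}\leq c(\|\nabla_{A}\nu\|_{L^{2}}^{2}+\|\nu\|_{L^{2}}^{2})$, and absorb using the smallness of $\|F_{A}^{+}\|_{L^{2}}$. The only cosmetic difference is that you make the Kato step explicit, whereas the paper simply invokes the Sobolev bound directly.
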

\begin{proof}
For any	 $\nu\in\Om^{+}(\mathfrak{g}_{P})$, by the Weitzenb\"{o}ck formula Equation (2.2b), we have
	$$\|\na_{A}\nu\|^{2}_{L^{2}}\leq 2\|d_{A}^{+,\ast}\nu\|^{2}_{L^{2}}+|\langle\frac{1}{3}R-2\w^{+})\circ\nu,\nu\rangle_{L^{2}}|+|\langle\{F_{A}^{+},\nu\},\nu\rangle_{L^{2}}|.$$
	We observe that
	$$|\langle\frac{1}{3}R-2\w^{+})\circ\nu,\nu\rangle_{L^{2}}|\leq c\|\nu\|^{2}_{L^{2}}$$
	and
	$$
	|\langle\{F_{A}^{+},\nu\},\nu\rangle_{L^{2}}|\leq c\|F^{+}_{A}\|_{L^{2}}\|\nu\|^{2}_{L^{4}}
	$$
	for some $c=c(g)$. Combining the preceding inequalities yields
	\begin{equation}\nonumber
	\begin{split}
	\|\nu\|^{2}_{L^{4}}&\leq c(\|\nu\|^{2}_{L^{2}}+\|\na_{A}\nu\|^{2}_{L^{2}})\\
	&\leq c(\|d_{A}^{+,\ast}\nu\|^{2}_{L^{2}}+\|\nu\|^{2}_{L^{2}}+\|F^{+}_{A}\|_{L^{2}(X)}\|\nu\|^{2}_{L^{4}}),\\
	\end{split}
	\end{equation}
	for some $c=c(g)$. Provided $c\|F^{+}_{A}\|_{L^{2}}
	\leq1/2$, rearrangement gives (\ref{E2}).
\end{proof}
Now, from Lemma \ref{L2} and the definition of $\mu_{g}(A)$, we have the following {a priori} estimate on the case of the Riemannian metric $g$ which has $\mu_{g}(A)>0$. One also can see \cite[Corollary 4.2]{Feehan2014.12} .
\begin{corollary}\label{C3}
Assume the hypothesis of Lemma \ref{L2}. Then there are positive constants, $C=C(g,G)$ and $\varepsilon=\varepsilon(g,P)$ with the following significance. If $A$ is a smooth connection  on $P$ obeying (\ref{E4}) and $\mu_{g}(A)>0$. If $\nu\in\Om^{+}(\mathfrak{g}_{P})$, then
\begin{equation}\label{E5}
\|\nu\|_{L^{4}(X)}\leq C(1+1/\sqrt{\mu_{g}(A)})\|d^{+,\ast}_{A}\nu\|_{L^{2}(X)}).
\end{equation}
\end{corollary}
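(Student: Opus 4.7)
The proof is essentially a short combination of the two ingredients already on the page: the curvature-dependent Sobolev estimate in Lemma \ref{L2} and the spectral characterization of $\mu_{g}(A)$ from Definition \ref{D2}. My plan is as follows.

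First, since $A$ satisfies the hypothesis (\ref{E4}) of Lemma \ref{L2}, I apply that lemma directly to the given $\nu\in\Om^{+}(\mathfrak{g}_{P})$ to obtain
\begin{equation*}
\|\nu\|^{2}_{L^{4}(X)}\leq C\bigl(\|d_{A}^{+,\ast}\nu\|^{2}_{L^{2}(X)}+\|\nu\|^{2}_{L^{2}(X)}\bigr),
\end{equation*}
for some constant $C=C(g)$. Here $\varepsilon$ is taken as in Lemma \ref{L2}, and the constant $C$ may be adjusted so that the same $\varepsilon=\varepsilon(g,P)$ works in the Corollary.

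Next, since $\mu_{g}(A)>0$, the variational characterization in Definition \ref{D2} gives the Poincar\'e-type inequality
\begin{equation*}
\|\nu\|^{2}_{L^{2}(X)}\leq \mu_{g}(A)^{-1}\|d_{A}^{+,\ast}\nu\|^{2}_{L^{2}(X)},
\end{equation*}
for every $\nu\in\Om^{+}(\mathfrak{g}_{P})$. Substituting this into the previous inequality yields
\begin{equation*}
\|\nu\|^{2}_{L^{4}(X)}\leq C\bigl(1+\mu_{g}(A)^{-1}\bigr)\|d_{A}^{+,\ast}\nu\|^{2}_{L^{2}(X)}.
\end{equation*}

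Finally, taking square roots and using the elementary estimate $\sqrt{1+x}\leq 1+\sqrt{x}$ for $x\geq 0$ (with $x=\mu_{g}(A)^{-1}$) produces the required inequality (\ref{E5}), after replacing $\sqrt{C}$ by a new constant still called $C=C(g,G)$. There is no genuine obstacle to overcome here; the only mild care needed is to confirm that the thresholds $\varepsilon$ and the constants from Lemma \ref{L2} are indeed independent of $A$ (depending only on $g$, $G$, and $P$), so that the final constants have the claimed dependence.
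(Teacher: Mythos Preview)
Your argument is correct and is precisely the approach the paper indicates: the Corollary is stated as an immediate consequence of Lemma \ref{L2} combined with the definition (\ref{E1}) of $\mu_{g}(A)$, and your substitution of the Poincar\'e-type bound $\|\nu\|_{L^{2}}^{2}\leq \mu_{g}(A)^{-1}\|d_{A}^{+,\ast}\nu\|_{L^{2}}^{2}$ into (\ref{E2}) followed by taking square roots is exactly what is intended.
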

We have the following useful a priori estimate on the solutions of complex Yang-Mills equations.
\begin{proposition}\label{P3}
Let $G$ be a compact, simple Lie group and P be a principal $G$-bundle over a closed, connected, four-dimensional, oriented, smooth manifold $X$ with Riemannian metric $g$ that is $good$ in the sense of Definition \ref{D1}. Then there are positive constants, $C=C(g,G)$ and $\varepsilon=\varepsilon(g,P)$ with the following significance. If the pair $(A,\phi)$ is a smooth solution of complex Yang-Mills on $P$ such that 
$$\|F_{A}^{+}\|_{L^{2}(X)}\leq\varepsilon$$
then
$$\|F_{A}^{+}\|_{L^{4}(X)}\leq C\|\phi\|^{2}_{L^{2}(X)}.$$
\end{proposition}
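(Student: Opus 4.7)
The plan is to split $F_A^+ = (F_A - \phi\wedge\phi)^+ + (\phi\wedge\phi)^+$ and estimate each summand separately in $L^4$. The first piece is a self-dual two-form on which the complex Yang-Mills system acts and so is controlled via the elliptic $L^4$ bound of Corollary \ref{C3}; the second piece is pointwise quadratic in $\phi$ and will be absorbed into the $L^\infty$ a priori bound on $\phi$ coming from the maximum principle estimate stated just before Section 3.

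Since $\|F_A^+\|_{L^2(X)}\leq\varepsilon$ and $g$ is good, Theorem \ref{T4} yields a uniform constant $\mu=\mu(g,P)>0$ with $\mu_g(A)\geq\mu$. Setting $\nu := (F_A-\phi\wedge\phi)^+ \in \Om^{+}(\mathfrak{g}_P)$, Corollary \ref{C3} gives
$$\|(F_A-\phi\wedge\phi)^+\|_{L^4(X)} \leq C\,\|d_A^{+,\ast}\nu\|_{L^2(X)},$$
for some $C=C(g,G,\mu)$. Because $\nu$ is self-dual, integration by parts against $1$-forms shows $d_A^{+,\ast}\nu = d_A^{\ast}\nu$, and the first line of (\ref{E3}) identifies $d_A^{\ast}\nu$ with $\ast[\phi,(d_A\phi)^-]$. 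A pointwise bracket estimate therefore yields
$$\|(F_A-\phi\wedge\phi)^+\|_{L^4(X)} \leq C\,\|\phi\|_{L^\infty(X)}\,\|d_A\phi\|_{L^2(X)}.$$

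To bound the two factors on the right, I would use the integrated Weitzenb\"ock identity (\ref{E8}): dropping the nonnegative term $2\|\phi\wedge\phi\|_{L^2}^2$ and moving the Ricci term to the right gives $\|\na_A\phi\|_{L^2}^2 \leq \|\text{Ric}\|_{L^\infty}\|\phi\|_{L^2}^2$, hence $\|d_A\phi\|_{L^2}\leq C\|\phi\|_{L^2}$; and the Gagliardo--Uhlenbeck $L^\infty$ estimate provides $\|\phi\|_{L^\infty}\leq C\|\phi\|_{L^2}$. Combined, these yield $\|(F_A-\phi\wedge\phi)^+\|_{L^4}\leq C\|\phi\|_{L^2}^2$. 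For the second summand, the pointwise inequality $|\phi\wedge\phi|\leq c|\phi|^2$, the compactness of $X$ (so $L^\infty\hookrightarrow L^4$), and the same $L^\infty$ bound give $\|(\phi\wedge\phi)^+\|_{L^4}\leq C\|\phi\|_{L^\infty}^2\leq C'\|\phi\|_{L^2}^2$. Summing the two contributions completes the proof.

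The main technical point is organizing constants so the final dependence is only on $g$, $G$, and $\varepsilon$ as claimed: this is handled by the uniformity of $\mu$ from Theorem \ref{T4}, together with the fact that the $L^\infty$ estimate on $\phi$ depends only on $g$. The identity $d_A^{+,\ast}\nu = d_A^{\ast}\nu$ for self-dual $\nu$ is routine, but it is the step that allows the Yang-Mills equation (\ref{E3}a) to be substituted into the elliptic bound and is therefore where the proof really hinges.
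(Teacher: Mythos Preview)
Your proof is correct and uses exactly the same ingredients as the paper's: Corollary~\ref{C3}, equation~(\ref{E3}a), the Weitzenb\"ock identity~(\ref{E8}), and the Gagliardo--Uhlenbeck $L^\infty$ bound on $\phi$. The only difference is organizational: the paper applies Corollary~\ref{C3} directly to $\nu=F_A^+$, so that (\ref{E3}a) gives $d_A^{+,\ast}F_A^+=d_A^{+,\ast}(\phi\wedge\phi)^++\ast[\phi,(d_A\phi)^-]$ and both right-hand terms are estimated in $L^2$ (the first via $|d_A^{+,\ast}(\phi\wedge\phi)^+|\leq c|\nabla_A\phi||\phi|$); you instead split $F_A^+$ at the $L^4$ level and handle $(\phi\wedge\phi)^+$ separately through $\|\phi\|_{L^\infty}^2$. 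The two routes are equivalent and yield the same dependence of constants, so the difference is purely cosmetic.
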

\begin{proof}
	Since $(A,\phi)$ satisfies complex Yang-Mills equations (\ref{E3}), we have             
	$$\|\na_{A}\phi\|^{2}_{L^{2}}=-\langle Ric\circ\phi,\phi\rangle_{L^{2}}-2\|\phi\wedge\phi\|^{2}_{L^{2}}\leq c\|\phi\|^{2}_{L^{2}}$$
	for some $c=c(g)$, and
	$$\|d_{A}^{+,\ast}F_{A}^{+}\|_{L^{2}}\leq\|d_{A}^{+,\ast}(\phi\wedge\phi)^{+}\|_{L^{2}}+\|[\phi,(d_{A}\phi)^{-}]\|_{L^{2}}.$$
	Nothing that $|d_{A}^{\ast}(\phi\wedge\phi)^{+}|\leq c|\na_{A}\phi|\cdot|\phi|$ for some constant $c=c(G)$. We then have
	\begin{equation}\nonumber
	\|d_{A}^{+,\ast}(\phi\wedge\phi)^{+}\|_{L^{2}}\leq c\|\na_{A}\phi\|_{L^{2}}\|\phi\|_{L^{\infty}}\leq c\|\na_{A}\phi\|_{L^{2}}\|\phi\|_{L^{2}}
	\end{equation}
	and
	\begin{equation}\nonumber
	|[\phi,(d_{A}\phi)^{-}]\|_{L^{2}}\leq c\|(d_{A}\phi)^{-}\|_{L^{2}}\|\phi\|_{L^{\infty}}\leq c\|\na_{A}\phi\|_{L^{2}}\|\phi\|_{L^{2}},
	\end{equation}
	for some $c=c(g)$. Combining the preceding inequalities yields
	\begin{equation}\label{E12}
	\|d_{A}^{+,\ast}F_{A}^{+}\|_{L^{2}}\leq c\|\phi\|^{2}_{L^{2}}
	\end{equation}
	For a smooth connection $A$ on $P$ with $\|F_{A}^{+}\|_{L^{2}}\leq\varepsilon$, where $\varepsilon=\varepsilon(g,P)$ is as in the hypothesis of Corollary \ref{C3}, we can apply the a priori estimate (\ref{E4}) to $\nu=F^{+}_{A}$ to obtain
	\begin{equation}\label{E13}
	\|F_{A}^{+}\|_{L^{4}}\leq c(1+1/\sqrt{\mu})\|d_{A}^{+,\ast}F_{A}^{+}\|_{L^{2}}),
	\end{equation}
	where $c=c(g)$ is as in Corollary \ref{C3} and $\mu=\mu(g,P)$ is the uniform positive lower {bound} for $\mu_{g}(A)$ provided by Theorem \ref{T4}. Combining the  inequalities (\ref{E12})--(\ref{E13}) yields
	$$
	\|F_{A}^{+}\|_{L^{4}}\leq c\|\phi\|^{2}_{L^{2}},$$
	for some $c=c(g,P)$. 
\end{proof}
{\begin{proof}[\textbf{Proof of Corollary \ref{C2}.}] Since pure Yang-Mills equation is a special case of complex Yang-Mills equations, i.e., $\phi=0$, the conclusion follows from Proposition \ref{P3}.
\end{proof}}
If $\G(\cdot,\cdot)$ denotes the Green kernel of the Laplace operator, $d^{\ast}d$, on $\Om^{2}(X)$, we define
\begin{equation}\nonumber
\|v\|_{L^{\sharp,2}(X)}:=\sup_{x\in X}\int_{X}\G(x,y)|v|(y)dvol_{g}(y)+\|v\|_{L^{2}(X)},\ \forall v\in\Om^{2}(\mathfrak{g}_{P}).
\end{equation} We recall that $\G(x,y)$ has a singularity comparable with $\operatorname{dist}_{g}(x,y)^{-2}$, when $x,y\in X$ are close. The norm $\|v\|_{L^{2}(X)}$ is conformally invariant and 
$$\|\nu\|_{L^{\sharp}(X)}:=\sup_{x\in X}\int_{X}\G(x,y)|v|(y)dvol_{g}(y)$$
is scale invariant \cite{Chavel}.  Indeed, one sees this by noting that if $g\mapsto\tilde{g}=\la^{-2}g$, then $\operatorname{dist}_{\tilde{g}}(x,y)=\la^{-1}\operatorname{dist}_{g}(x,y)$ and $dvol_{\tilde{g}}=\la^{-1}dvol_{g}$, while for $v\in\Om^{2}(X,\mathfrak{g}_{P})$, we have $|v|_{\tilde{g}}=\la^{2}|v|_{g}$. One can see that \cite[Lemma 4.1]{Feehan}
$$\|\nu\|_{L^{\sharp}(X)}\leq c_{p}\|\nu\|_{L^{p}(X)},\ \forall  \ p>2,$$
where $c_{p}$ depends at most on $p$ and the Riemannian metric, $g$, on $X$.

Let $A$ be a connection on principal $G$-bundle $P$ over $X$. The ASD connection for a second connection $A+a$, where $a\in\Om^{1}(X,\mathfrak{g}_{P})$ is a bundle-value $1$-form, can be written
\begin{equation}\label{EA0}
d^{+}_{A}a+(a\wedge a)^{+}=-F_{A}^{+}.
\end{equation} 
Following Taubes we seek a solution of the equation (\ref{EA0}) in the form 
$$a=d_{A}^{+,\ast}u$$
where $u\in\Om^{2,+}(X,\mathfrak{g}_{P})$ is a bundle-value self-dual $2$-form. Then (\ref{EA0}) becomes to a second order equation:
\begin{equation}\label{EA1}
d_{A}^{+}d_{A}^{+,\ast}u+(d_{A}^{+,\ast}u\wedge d_{A}^{+,\ast}u)^{+}=-F^{+}_{A}.
\end{equation}
If $X=S^{4}$ and $g$ is the stand metric on four-sphere, then $\frac{S}{3}-2w^{+}=2$. By Equation (2.2b), the Equation can be rewritten to 
$$\na^{\ast}_{A}\na_{A}u+u+\frac{1}{2}(d_{A}^{+,\ast}u\wedge d_{A}^{+,\ast}u)^{+}+\{F_{A}^{+},u\}=-\frac{1}{2}F_{A}^{+}.$$
Donaldson \cite{Donaldson1993} considered a general PDE
	$$\na_{A}^{\ast}\na_{A}u+u+b(\na_{A}u,\na_{A}u)+f(u)=\rho,$$
	where $b$ is symmetric, bilinear, bundle map. If $\|f\|_{L^{\sharp}}<1$ and $\|\rho\|_{L^{\sharp}}<\frac{(1-\|f\|_{L^{\sharp}})^{2}}{8}$, Donaldson \cite[Proposition 2]{Donaldson1993} proved that there is a unique solution $u$ to above equation with $\|u\|_{L^{\infty}}\leq \frac{8\|\rho\|_{L^{\sharp}}}{1-\|f\|_{L^{\sharp}}}$. Feehan-Leness \cite{Feehan/Leness} extended the ideas of Donaldson \cite{Donaldson1993} and Taubes \cite{Taubes1984} to the general Riemannian four-manifold case. We give a sketch of the proof of Theorem \ref{T3} in Appendix. Our proof is similar to the way in  \cite[Proposition  7.6]{Feehan/Leness}.  
\begin{theorem}\label{T3}
	Let $G$ be a compact Lie group, $P$ a principal $G$-bundle over a compact, connected, four-dimensional manifold, $X$, with Riemannian metric, $g$, and $E_{0},\mu\in(0,\infty)$ constants. Then there are constants, $C_{0}=C_{0}(g,\mu)\in(0,\infty)$ and $\eta=\eta(g,\mu)\in(0,1]$, with the following significance. If $A$ is a $C^{\infty}$ connection on $P$ such that
	\begin{equation}\nonumber
	\begin{split}
	&\mu_{g}(A)\geq\mu,\\
	&\|F_{A}^{+}\|_{L^{\sharp,2}(X)}\leq\eta,\\
	&\|F_{A}\|_{L^{2}(X)}\leq E_{0},\\
	\end{split}
	\end{equation}
	then there is a unique solution $u\in\Om^{+}(\mathfrak{g}_{P})$ to equation (\ref{EA1}) such that 
	$$\|u\|_{L^{2}_{1}(X)}\leq C_{0}\|F^{+}_{A}\|_{L^{4/3}(X)}.$$
\end{theorem}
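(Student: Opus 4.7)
The plan is to solve equation (\ref{EA1}) by the Banach contraction mapping theorem. Since $\mu_g(A)\geq\mu>0$, the self-adjoint elliptic operator $L_A:=d_A^+ d_A^{+,*}$ is invertible on $\Om^{2,+}(\mathfrak{g}_P)$, with Green operator $G_A := L_A^{-1}$, and (\ref{EA1}) rewrites as the fixed-point problem
$$u = T(u) := -G_A\bigl(F_A^+ + Q(u)\bigr), \quad Q(u):=(d_A^{+,*}u\wedge d_A^{+,*}u)^+.$$
The overall strategy is to produce a unique fixed point in a small ball of an appropriate scale-invariant Banach space, then refine to the stated $L^2_1$ bound.

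First I would establish the linear mapping estimates for $G_A$ uniformly in the data $(g,\mu,E_0)$. Combining the spectral inequality $\langle L_A u,u\rangle \geq \mu\|u\|_{L^2}^2$ with the $L^4$ estimate of Corollary \ref{C3} and the Weitzenb\"ock formula (2.2b) (whose curvature coupling $\{F_A^+,\cdot\}$ is absorbed using the smallness of $\|F_A^+\|_{L^{\sharp,2}}\leq\eta$ and the a priori bound $\|F_A\|_{L^2}\leq E_0$), one obtains a uniform bound of the form $\|G_A\rho\|_{L^2_1(X)} \leq C(g,\mu)\|\rho\|_{L^{4/3}(X)}$, together with a Donaldson-type estimate $\|G_A\rho\|_{L^\infty(X)}\leq C(g,\mu)\|\rho\|_{L^{\sharp,2}(X)}$ of the kind recalled in the discussion preceding the theorem.

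Next I would set up the contraction in an appropriate space. Following \cite[Proposition 7.6]{Feehan/Leness}, the natural choice is a ball in a norm such as $\|u\|_* := \|u\|_{L^2_1(X)} + \|d_A^{+,*}u\|_{L^\infty(X)}$, since the pointwise bound $|Q(u)|\leq c|d_A^{+,*}u|^2$ requires more than plain $L^2$ control of $d_A^{+,*}u$ to land in $L^{4/3}$. Using the scale-invariant $L^\sharp$ calculus of \cite[Lemma 4.1]{Feehan}, one derives quadratic estimates such as
$$\|Q(u)\|_{L^{\sharp,2}(X)} \leq C\|d_A^{+,*}u\|_{L^\infty(X)}\|d_A^{+,*}u\|_{L^{\sharp,2}(X)},$$
and a bilinear analogue for $Q(u_1)-Q(u_2)$. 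Combined with the linear estimates from the previous step, these make $T$ a self-map and strict contraction on a ball of radius $R \sim \|F_A^+\|_{L^{\sharp,2}}$ provided $\eta$ is chosen sufficiently small in terms of $(g,\mu)$. The Banach fixed-point theorem then yields a unique $u$, and testing the resulting equation $L_A u + Q(u) = -F_A^+$ against $u$ and invoking the $L^{4/3}$-to-$L^2_1$ linear estimate gives $\|u\|_{L^2_1(X)}\leq C_0\|F_A^+\|_{L^{4/3}(X)}$ after absorbing the now-small quadratic term.

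The main obstacle is the delicate function-space setup. The nonlinearity $Q(u)$ is quadratic in first derivatives of $u$, so $|d_A^{+,*}u|^2 \in L^1$ is not automatically in $L^{4/3}$, and a naive contraction in $L^2_1$ does not close. The remedy, due to Taubes and Donaldson and systematized in \cite{Feehan/Leness,Feehan}, is to work in the conformally/scale-invariant $L^\sharp$ and $L^{\sharp,2}$ norms, which are tailored to the singularity of the Green kernel of $d^*d$ on a four-manifold; this is exactly why the theorem's smallness hypothesis is phrased on $\|F_A^+\|_{L^{\sharp,2}}$ rather than on $\|F_A^+\|_{L^2}$. The a priori bound $\|F_A\|_{L^2}\leq E_0$ enters through the zeroth-order curvature coupling in the Weitzenb\"ock identity (2.2b), ensuring the constants $C_0(g,\mu)$ and $\eta(g,\mu)$ are uniform over the class of connections being considered.
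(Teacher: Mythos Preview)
Your overall strategy---contraction in a scale-invariant norm tuned to the Green kernel, then refinement to an $L^{4/3}\to L^2_1$ estimate---matches the paper's, and your diagnosis of why a naive $L^2_1$ contraction fails is correct. There is, however, one step that does not close as written. The norm you propose requires $L^\infty$ control on $d_A^{+,*}u$, and you invoke a Donaldson-type bound $\|G_A\rho\|_{L^\infty}\leq C\|\rho\|_{L^{\sharp,2}}$ to supply it; but that estimate bounds $u=G_A\rho$ pointwise, not its first derivative. In dimension four $d_A^{+,*}G_A$ behaves like $\nabla\Delta^{-1}$, which is one derivative short of mapping $L^{\sharp,2}$ into $L^\infty$, so $T$ does not self-map a ball in your $\|\cdot\|_*$ and the quadratic estimate you wrote cannot be fed back.

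The paper's remedy (following \cite{Feehan,Feehan/Leness}) is to replace $L^\infty$ by Feehan's critical-exponent norm $\|a\|_{L^{2\sharp}}:=\sup_{x}\|\operatorname{dist}_g^{-1}(x,\cdot)\,|a|\|_{L^2}$, which enjoys both the embedding $L^2_1\subset L^{2\sharp}$ and the multiplication $L^{2\sharp}\times L^{2\sharp}\to L^\sharp$ \cite[Lemmas~4.1 and~4.3]{Feehan}. Concretely, the paper substitutes $u=G_A\xi$ and runs the fixed-point argument for $\xi$ directly in $L^{\sharp,2}$: the linear estimate $\|d_A^{+,*}G_A\xi\|_{L^2_1}\leq C\|\xi\|_{L^{\sharp,2}}$ of Proposition~\ref{AP1} then feeds into the quadratic bound $\|q(\xi_1)-q(\xi_2)\|_{L^{\sharp,2}}\leq C(\|\xi_1\|_{L^{\sharp,2}}+\|\xi_2\|_{L^{\sharp,2}})\|\xi_1-\xi_2\|_{L^{\sharp,2}}$, and Lemma~\ref{AL1} supplies the fixed point. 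The final $L^{4/3}$ bound is obtained in the paper by an induction on the Picard iterates using the companion estimate $\|d_A^{+,*}G_A\xi\|_{L^2}\leq C\|\xi\|_{L^{4/3}}$; your proposed direct absorption (testing $L_Au+Q(u)=-F_A^+$ against $u$) would also work once the $L^{\sharp,2}$ smallness of $\xi$ is established, so on that last step the two routes are essentially equivalent.
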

Thus, we have a useful 
\begin{proposition}\label{P1}
Let $G$ be a compact Lie group, $P$ a principal $G$-bundle over a compact, connected, four-dimensional manifold, $X$, with Riemannian metric, $g$, and $E\in[1,\infty)$. Assume that $g$ is $good$ in the sense of Definition \ref{D1}. Then there exist constants $c=c(g,P,G)\in(0,\infty)$ and $K=K(g,P,G)$ with following significance. Let $(A,\phi)$ be a smooth solution of complex Yang-Mills over $X$ satisfies
	\begin{equation}\nonumber
	\begin{split}
	&\|\phi\|_{L^{2}(X)}\leq E,\\
	&\|F_{A}^{+}\|_{L^{2}(X)}\leq \varepsilon.\\
	\end{split}
	\end{equation}
{If $c\varepsilon^{1/3}E^{4/3}\leq cK\leq\eta$, where $\eta$ is the constant as in the hypothesis of Theorem \ref{T3},} then there is a anti-self-dual connection, $A_{\infty}$ on $P$, of class $C^{\infty}$ such that
	$$\|A_{\infty}-A\|_{L^{2}(X)}\leq c\|F^{+}_{A}\|_{L^{2}(X)}.$$
\end{proposition}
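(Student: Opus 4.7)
The plan is to apply Theorem \ref{T3} directly to the connection $A$, producing $u\in\Om^{+}(\mathfrak{g}_{P})$ that solves (\ref{EA1}), and then to set $A_{\infty}:=A+d_{A}^{+,\ast}u$. Since (\ref{EA1}) is precisely the reformulation of the ASD equation $F^{+}_{A+a}=0$ with $a=d_{A}^{+,\ast}u$, the connection $A_{\infty}$ is automatically anti-self-dual, and the $L^{2}_{1}$-estimate on $u$ translates into the desired $L^{2}$ control on $A_{\infty}-A$.

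The bulk of the work is to verify the three hypotheses of Theorem \ref{T3} for $A$. The least-eigenvalue condition $\mu_{g}(A)\geq\mu$ is immediate from Theorem \ref{T4}, since $g$ is \emph{good} and $\|F_{A}^{+}\|_{L^{2}(X)}\leq\varepsilon$ (shrinking $\varepsilon$ if necessary). The total curvature bound $\|F_{A}\|_{L^{2}(X)}\leq E_{0}$ follows from the Chern-Weil identity $\|F_{A}\|_{L^{2}(X)}^{2}=2\|F_{A}^{+}\|_{L^{2}(X)}^{2}-4\pi^{2}p_{1}(P)[X]$, giving $E_{0}^{2}\leq 2\varepsilon^{2}+4\pi^{2}|p_{1}(P)[X]|$. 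The decisive step is controlling $\|F_{A}^{+}\|_{L^{\sharp,2}(X)}$. Combining the quartic estimate of Proposition \ref{P3},
\[ \|F_{A}^{+}\|_{L^{4}(X)}\leq C\|\phi\|_{L^{2}(X)}^{2}\leq CE^{2}, \]
with the H\"{o}lder interpolation
\[ \|F_{A}^{+}\|_{L^{3}(X)}\leq\|F_{A}^{+}\|_{L^{2}(X)}^{1/3}\|F_{A}^{+}\|_{L^{4}(X)}^{2/3}\leq C\varepsilon^{1/3}E^{4/3}, \]
and the embedding $\|v\|_{L^{\sharp}(X)}\leq c_{3}\|v\|_{L^{3}(X)}$ recorded in the paper, one obtains $\|F_{A}^{+}\|_{L^{\sharp,2}(X)}\leq C\varepsilon^{1/3}E^{4/3}$. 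The smallness hypothesis $c\varepsilon^{1/3}E^{4/3}\leq cK\leq\eta$ then forces $\|F_{A}^{+}\|_{L^{\sharp,2}(X)}\leq\eta$, so Theorem \ref{T3} applies.

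Theorem \ref{T3} thus supplies $u\in\Om^{+}(\mathfrak{g}_{P})$ with $\|u\|_{L^{2}_{1}(X)}\leq C_{0}\|F_{A}^{+}\|_{L^{4/3}(X)}\leq C\|F_{A}^{+}\|_{L^{2}(X)}$, the last step by H\"{o}lder on the compact manifold $X$. Setting $a:=d_{A}^{+,\ast}u$ and $A_{\infty}:=A+a$, equation (\ref{EA1}) gives $F_{A_{\infty}}^{+}=F_{A}^{+}+d_{A}^{+}a+(a\wedge a)^{+}=0$, so $A_{\infty}$ is ASD, and
\[ \|A_{\infty}-A\|_{L^{2}(X)}=\|d_{A}^{+,\ast}u\|_{L^{2}(X)}\leq\|u\|_{L^{2}_{1}(X)}\leq c\|F_{A}^{+}\|_{L^{2}(X)}, \]
as claimed. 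The $C^{\infty}$ regularity of $A_{\infty}$ follows from a standard elliptic bootstrap applied to (\ref{EA1}), using that $u\in L^{2}_{1}$ is a weak solution of a quasilinear elliptic equation with smooth coefficients. The principal difficulty is the third hypothesis above: Proposition \ref{P3} alone produces only an $L^{4}$ bound on $F_{A}^{+}$ in terms of $\|\phi\|_{L^{2}}^{2}$, whereas Theorem \ref{T3} demands the scale-invariant $L^{\sharp,2}$ norm, and it is the interpolation chain $L^{2}\cap L^{4}\hookrightarrow L^{3}\hookrightarrow L^{\sharp}$ that dictates the exponents $\varepsilon^{1/3}E^{4/3}$ appearing in the smallness condition.
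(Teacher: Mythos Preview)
Your argument is correct and follows the same route as the paper's own proof: use Proposition~\ref{P3} to get $\|F_{A}^{+}\|_{L^{4}}\leq cE^{2}$, interpolate to $L^{3}$, pass to $L^{\sharp,2}$, and then invoke Theorem~\ref{T3} to produce $u$ and set $A_{\infty}=A+d_{A}^{+,\ast}u$. You are in fact more explicit than the paper in checking the side hypotheses of Theorem~\ref{T3} (the eigenvalue bound via Theorem~\ref{T4} and the total-curvature bound via Chern--Weil), which the paper leaves implicit.
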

\begin{proof}
	From the Proposition \ref{P3}, we have 
	$$\|F^{+}_{A}\|_{L^{4}}\leq c\|\phi\|_{L^{2}}^{2}\leq cE^{2}$$
	for some $c=c(g)$. Then interpolation inequality gives
	$$\|F_{A}^{+}\|_{L^{3}}\leq \|F_{A}^{+}\|^{1/3}_{L^{2}}\|F_{A}^{+}\|^{2/3}_{L^{4}}.$$
	Hence, we have
	$$
	\|F_{A}^{+}\|_{L^{\sharp,2}}\leq c\|F_{A}^{+}\|_{L^{3}}\leq c\varepsilon^{1/3}E^{4/3},
	$$
	for some $c=c(g)$. Then following  Theorem \ref{T3}, there exist a unique solution $u\in\Om^{+}(\mathfrak{g}_{P})$ such that $A_{\infty}:=A+d^{+,\ast}_{A}u$ is anti-self-dual with respect to metric $g$ and $u$ satisfies 
	$$\|d_{A}^{+,\ast}u\|_{L^{2}}\leq c\|u\|_{L^{2}_{1}}\leq c\|F_{A}^{+}\|_{L^{2}},$$
where $c=c(g,P,G)$ is a positive constant.
\end{proof}

\subsection{ Complete the proof of Theorem \ref{T1}}
At first, we would construct an estimate between $F_{A}^{+}$ and $(\phi\wedge\phi)^{+}$.
\begin{proposition}\label{P2}
	Let $G$ be a compact Lie group, $P$ a principal $G$-bundle over a closed, connected, four-dimensional manifold, $X$, with Riemannian metric, $g$, that  is  good in the sense of Definition \ref{D1}. Then there are constants $\varepsilon=\varepsilon(g,P)\in(0,1)$, $c=c(g,P)\in[1,\infty)$ with following significance. If $(A,\phi)$ is a smooth solution of complex Yang-Mills over $X$ such that
	$$
	\|F_{A}^{+}\|_{L^{2}(X)}\leq \varepsilon,
	$$
	then 
	$$\|F_{A}^{+}\|_{L^{2}(X)}\leq c(1+\|\phi\|^{2}_{L^{2}(X)})\|(\phi\wedge\phi)^{+}\|_{L^{2}(X)}.$$
\end{proposition}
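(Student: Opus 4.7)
The strategy is to estimate the self-dual $2$-form $u := F_A^+ - (\phi\wedge\phi)^+$ by exploiting the spectral lower bound $\mu_g(A)\geq\mu>0$ furnished by Theorem \ref{T4}, which applies (after shrinking $\varepsilon$ if necessary) because the hypothesis $\|F_A^+\|_{L^2(X)}\leq\varepsilon$ places $A$ in its domain. Rewriting the complex Yang--Mills equations (\ref{E3}) in terms of $u$ yields the tightly coupled pair
\[
d_A^{+,*} u \;=\; *[\phi,(d_A\phi)^-], \qquad d_A^*(d_A\phi)^- \;=\; *[\phi,u].
\]
Once I establish the bound $\|u\|_{L^2(X)} \leq c(1+\|\phi\|_{L^2(X)}^2)\|(\phi\wedge\phi)^+\|_{L^2(X)}$, the triangle inequality $\|F_A^+\|_{L^2} \leq \|u\|_{L^2} + \|(\phi\wedge\phi)^+\|_{L^2}$ closes the argument.

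The central step is a sharp $L^2$ bound on $(d_A\phi)^-$ in which \emph{both} $\|u\|_{L^2}$ and $\|(\phi\wedge\phi)^+\|_{L^2}$ appear as factors. I would obtain this by pairing the second identity above with $\phi$ in $L^2$ and integrating by parts, using the orthogonality of the self-dual and anti-self-dual pieces of $d_A\phi$:
\[
\|(d_A\phi)^-\|_{L^2}^2 \;=\; \langle (d_A\phi)^-,d_A\phi\rangle_{L^2} \;=\; \langle *[\phi,u],\phi\rangle_{L^2}.
\]
The $\mathrm{ad}$-invariance of the Killing form (equivalently, cyclicity of trace), combined with the Hodge identity relating $\int_X \mathrm{tr}(\alpha\wedge\beta)$ to $\langle \alpha, *\beta\rangle_{L^2}$ on $\mathfrak{g}_P$-valued $2$-forms, rewrites the right-hand side up to a universal constant as $\langle u,\phi\wedge\phi\rangle_{L^2}$; since $u$ is self-dual, the pairing with $(\phi\wedge\phi)^-$ vanishes by the $\Omega^{2,+}\oplus\Omega^{2,-}$ orthogonality, leaving
\[
\|(d_A\phi)^-\|_{L^2}^2 \;\leq\; c\,\|u\|_{L^2}\|(\phi\wedge\phi)^+\|_{L^2}.
\]

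From here the argument chains together quickly. Applying the spectral bound $\|u\|_{L^2}^2 \leq \mu^{-1}\|d_A^{+,*}u\|_{L^2}^2$ to $u$, invoking the first identity above, and using the $L^\infty$ bound $\|\phi\|_{L^\infty}\leq c\|\phi\|_{L^2}$ from the maximum-principle estimate of Gagliardo--Uhlenbeck stated at the end of Section 2 gives
\[
\mu\|u\|_{L^2}^2 \;\leq\; \|[\phi,(d_A\phi)^-]\|_{L^2}^2 \;\leq\; c\|\phi\|_{L^2}^2\,\|(d_A\phi)^-\|_{L^2}^2 \;\leq\; c\|\phi\|_{L^2}^2\,\|u\|_{L^2}\|(\phi\wedge\phi)^+\|_{L^2}.
\]
Dividing by $\|u\|_{L^2}$ (or treating the trivial case $\|u\|=0$ separately) gives $\|u\|_{L^2} \leq c\mu^{-1}\|\phi\|_{L^2}^2\|(\phi\wedge\phi)^+\|_{L^2}$, and the triangle inequality finishes. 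The principal obstacle is the sharp identity of the second paragraph: the naive estimate $|\langle *[\phi,u],\phi\rangle| \leq c\|\phi\|_{L^\infty}\|\phi\|_{L^2}\|u\|_{L^2}$ would feed back to yield only the \emph{absolute} bound $\|u\|_{L^2} \leq c\mu^{-1}\|\phi\|_{L^2}^4$, which is far too weak to prove the proposition. Extracting the sharper relative bound forces one to unravel the matrix-valued wedge algebra via the identity $\mathrm{tr}([\phi,u]\wedge\phi) = -2\,\mathrm{tr}(u\wedge(\phi\wedge\phi))$, which follows from cyclicity of trace and the graded commutativity of a $1$-form with a $2$-form; careful bookkeeping of Hodge signs is required, but no additional analytic ingredient beyond Theorem \ref{T4} and the $L^\infty$ bound enters.
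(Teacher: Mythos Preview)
Your proposal is correct and follows essentially the same route as the paper: set $u=(F_A-\phi\wedge\phi)^+$, derive the identity $\|(d_A\phi)^-\|_{L^2}^2 = 2\langle u,(\phi\wedge\phi)^+\rangle_{L^2}$ from (\ref{E3}b) by integration by parts and the trace/wedge manipulation you describe, feed this into the spectral bound $\mu\|u\|_{L^2}^2\leq\|d_A^{+,*}u\|_{L^2}^2$ combined with (\ref{E3}a) and the Gagliardo--Uhlenbeck $L^\infty$ estimate, and finish by the triangle inequality. The paper simply asserts the key identity $\|(d_{A}\phi)^{-}\|^{2}_{L^{2}}=2\langle u,(\phi\wedge\phi)^{+}\rangle_{L^{2}}$ whereas you spell out its algebraic derivation, but otherwise the arguments coincide.
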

\begin{proof}
	Since $(A,\phi)$ satisfies complex Yang-Mills equations, we have
	\begin{equation}\nonumber
	\begin{split}
	\|(d_{A}\phi)^{-}\|^{2}_{L^{2}}&=2\langle (F_{A}-\phi\wedge\phi)^{+},(\phi\wedge\phi)^{+}\rangle_{L^{2}}\\
	&\leq 2\|(F_{A}-\phi\wedge\phi)^{+}\|_{L^{2}}\|(\phi\wedge\phi)^{+}\|_{L^{2}}
	\end{split}
	\end{equation}
	and
	$$\|d_{A}^{+,\ast}(F_{A}-\phi\wedge\phi)^{+}\|_{L^{2}}=|[\phi,(d_{A}\phi)^{-}]\|_{L^{2}}.$$
	We observe that
	\begin{equation}\nonumber
	\begin{split}
	\|(F_{A}-\phi\wedge\phi)^{+}\|^{2}_{L^{2}}&\leq (1/\mu)\|d_{A}^{+,\ast}(F_{A}-\phi\wedge\phi)^{+}\|^{2}_{L^{2}}\\
	&\leq c\|(d_{A}\phi)^{-}\|^{2}_{L^{2}}\|\phi\|^{2}_{L^{2}},\\
	\end{split}
	\end{equation}
	for some $c=c(g)$. Combining the preceding inequalities yields
	\begin{equation}\nonumber
	\|(F_{A}-\phi\wedge\phi)^{+}\|_{L^{2}}^{2}\leq c\|(F_{A}-\phi\wedge\phi)^{+}\|_{L^{2}}\|(\phi\wedge\phi)^{+}\|_{L^{2}}\|\phi\|^{2}_{L^{2}}
	\end{equation}
	for some $c=c(g)$. Hence we have
	$$\|(F_{A}-\phi\wedge\phi)^{+}\|_{L^{2}}\leq c\|(\phi\wedge\phi)^{+}\|_{L^{2}}\|\phi\|^{2}_{L^{2}}.$$
	Therefore, we have
	$$\|F_{A}^{+}\|_{L^{2}}\leq c(1+\|\phi\|^{2}_{L^{2}})\|(\phi\wedge\phi)^{+}\|_{L^{2}}.$$
	We complete the proof of this proposition.
\end{proof}
\begin{proof}[\textbf{Proof of Theorem \ref{T1}.}]The idea is following the case of Kapustin-Witten equations \cite{HuangLMP}. {Let $\varepsilon=\varepsilon(g,P)$ be a positive constant satisfying the conclusions of Theorem \ref{T3}.} Suppose that $(A,\phi)$ is a smooth solution of complex Yang-Mills equation with $0\neq\|F_{A}^{+}\|_{L^{2}}\leq\varepsilon$. {Let $E_{0}=\frac{K^{3/4}}{\varepsilon^{1/4}}$, where $K$ is the positive constant as in the hypothesis of Proposition \ref{P1}.} For $\phi$ with $\|\phi\|_{L^{2}}\leq E\leq E_{0}$,  then {$\|F^{+}_{A}\|_{L^{2,\sharp}}\leq cK\leq \eta$.} Then following Proposition \ref{P1}, there exists a anti-self-dual connection $A_{\infty}$ such that 
	$$\|A_{\infty}-A\|_{L^{2}}\leq c\|F_{A}^{+}\|_{L^{2}},$$
	for some $c=c(g)$. The {Weitzenb\"{o}ck} formula (2.2b) gives,
	\begin{equation}\nonumber
	(2d^{-,\ast}_{A_{\infty}}d^{-}_{A_{\infty}}+d_{A_{\infty}}d_{A_{\infty}}^{\ast})\phi=\na_{A_{\infty}}^{\ast}\na_{A_{\infty}}\phi+Ric\circ\phi.
	\end{equation}
	It provides an integral inequality
	\begin{equation}\nonumber
	\|\na_{A_{\infty}}\phi\|^{2}_{L^{2}}+\langle Ric\circ\phi,\phi\rangle_{L^{2}}\geq0.
	\end{equation}
	Since $(A,\phi)$ satisfies complex Yang-Mills equations, we also have an integral identity 
	\begin{equation}\nonumber
	\|\na_{A}\phi\|^{2}_{L^{2}}+\langle Ric\circ\phi,\phi\rangle_{L^{2}(X)}+4\|(\phi\wedge\phi)^{+}\|^{2}_{L^{2}}=0.
	\end{equation}
	Combining the preceding inequalities yields
	\begin{equation}\nonumber
	\begin{split}
	4\|(\phi\wedge\phi)^{+}\|^{2}_{L^{2}}&=-\|\na_{A}\phi\|^{2}_{L^{2}}-\langle Ric\circ\phi,\phi\rangle_{L^{2}}\\
	&\leq-\|\na_{A_{\infty}}\phi\|^{2}_{L^{2}}-\langle Ric\circ\phi,\phi\rangle+\|\na_{A}\phi-\na_{A_{\infty}}\phi\|_{L^{2}}^{2}\\
	&\leq\|[A-A_{\infty},\phi]\|_{L^{2}}^{2}\\
	&\leq c\|A-A_{\infty}\|_{L^{2}}^{2}\|\phi\|^{2}_{L^{\infty}}\\
	&\leq c\|F_{A}^{+}\|_{L^{2}}^{2}\|\phi\|^{2}_{L^{2}}
	\end{split}
	\end{equation}
	for some $c=c(g)$. Following Proposition \ref{P2}, we have
	$$\|(\phi\wedge\phi)^{+}\|^{2}_{L^{2}}\leq c\|(\phi\wedge\phi)^{+}\|^{2}_{L^{2}}(1+\|\phi\|^{2}_{L^{2}})^{2}\|\phi\|^{2}_{L^{2}}\leq \frac{c(1+E^{2})^{2}E^{2}}{4}\|(\phi\wedge\phi)^{+}\|^{2}_{L^{2}}.$$
{We can choose $E$ small enough so that the above is contradiction with $F_{A}^{+}\neq0$.} We complete the proof of Theorem \ref{T1}.
\end{proof} 
\begin{proof}
[\textbf{Proof of Corollary \ref{C1}.}] If the pair $(A,\phi)$ is a smooth solution of Kapustin-Witten equations, then
\begin{equation*}
\|F^{+}_{A}\|_{L^{2}}=\|(\phi\wedge\phi)^{+}\|_{L^{2}}\leq 2\|\phi\|^{2}_{L^{2}}.
\end{equation*}
{We can provide $\|\phi\|^{2}_{L^{2}}$ small enough to ensure that $	\|F^{+}_{A}\|_{L^{2}}\leq\varepsilon$ and $c\|F^{+}_{A}\|_{L^{2}}^{1/3}\|\phi\|_{L^{2}}^{3/4}\leq c\|\phi\|_{L^{2}}^{2}\leq\eta$, where the positive constants $\varepsilon,\eta$ are as in the hypotheses of Proposition \ref{P1}.} Hence following the way in Theorem \ref{T1}, we can prove that the connection $A$ is anti-self-dual with respect to the metric $g$. 
\end{proof}

\section*{Appendix: Approximate ASD connection}
At the request of the anonymous referee, we give a sketch of the proof of Theorem \ref{T3}.  Suppose that $\mu_{g}(A)>0$ for $[A]\in\mathcal{B}_{\varepsilon}(P,g)$, i.e., the Laplacian 
$$d_{A}^{+}d_{A}^{+,\ast}: L^{2}_{k+1}(\Om^{2,+}\otimes\mathfrak{g}_{P})\rightarrow  L^{2}_{k-1}(\Om^{2,+}\otimes\mathfrak{g}_{P})$$	
is inverse. For the purposes of applying Banach-space fixed theory to solve the non-linear equation (\ref{EA1}), it is convenient to write
$$u=G_{A}\xi,\ for\ some\ \xi\in L^{2}_{k-1}(\Om^{2,+}\otimes\mathfrak{g}_{P}),$$
where $G_{A}:L^{2}_{k-1}(\Om^{2,+}\otimes\mathfrak{g}_{P})\rightarrow L^{2}_{k+1}(\Om^{2,+}\otimes\mathfrak{g}_{P})$ is the Green's operator for the Laplacian $d_{A}^{+}d_{A}^{+,\ast}$, i.e., $d_{A}^{+}d_{A}^{+,\ast}G_{A}=Id$ for any $\xi\in L^{2}_{k-1}(\Om^{2,+}\otimes\mathfrak{g}_{P})$. Therefore, it suffices to solve for $\xi$ such that
\begin{equation}\label{EA5}
\xi+(d_{A}^{+,\ast}G_{A}\xi\wedge d_{A}^{+,\ast}G_{A}\xi)^{+}=-F_{A}^{+}.
\end{equation}

\begin{lemma}\label{AL2}(\cite[Corollary 5.10]{Feehan})
Let $X$ be a closed, oriented, smooth four-manifold with Riemannian
metric $g$. Then there are positive constants $c=c(g)$ and  $\varepsilon=\varepsilon(c)$ with the following significance. Let $A$ be an smooth connection on $G$-bundle $P$ over $X$ such that $\|F_{A}^{+}\|_{L^{\sharp,2}(X)}<\varepsilon$. Then the following estimate holds for any $v\in L^{\sharp,2}(\Om^{2,+}\otimes\mathfrak{g}_{P})$:
$$\|d_{A}^{+,\ast}v\|_{L^{2}_{1}(X)}\leq c(1+\|F_{A}\|_{L^{2}(X)})(\|d_{A}^{+}d_{A}^{+,\ast}v\|_{L^{\sharp,2}(X)})+\|v\|_{L^{2}(X)}.$$
\end{lemma}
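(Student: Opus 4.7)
The plan is to treat the $1$-form $w:=d_{A}^{+,\ast}v$ and its ``Laplacian'' $f:=d_{A}^{+}d_{A}^{+,\ast}v = d_{A}^{+}w$, and to estimate $\|w\|_{L^{2}}$ and $\|\na_{A}w\|_{L^{2}}$ separately by combining the Weitzenb\"ock identities (2.2a) on $\Om^{1}(\mathfrak{g}_{P})$ and (2.2b) on $\Om^{2,+}(\mathfrak{g}_{P})$ with the $L^{\sharp,2}$-smallness of $F_{A}^{+}$.

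For the $L^{2}$-bound on $w$, I would pair $w$ with itself and use adjointness: $\|w\|_{L^{2}}^{2}=\langle v,f\rangle_{L^{2}}$, and since $\|\cdot\|_{L^{2}}\le\|\cdot\|_{L^{\sharp,2}}$ by the definition of the $L^{\sharp,2}$-norm, Cauchy--Schwarz and Young yield the $\|v\|_{L^{2}}$ contribution to the target right-hand side. For $\|\na_{A}w\|_{L^{2}}$ I would integrate (2.2a) against $w$ to obtain
\[
\|\na_{A}w\|_{L^{2}}^{2}=2\|d_{A}^{-}w\|_{L^{2}}^{2}+\|d_{A}^{\ast}w\|_{L^{2}}^{2}-\langle Ric\,w,w\rangle -2\langle\ast[F_{A}^{+},w],w\rangle.
\]
Since $v$ is self-dual one has $d_{A}^{+,\ast}v=d_{A}^{\ast}v$, and from $d_{A}^{2}=[F_{A},\cdot]$ one gets the algebraic identity $d_{A}^{\ast}w=d_{A}^{\ast}d_{A}^{\ast}v=\ast[F_{A},v]$; hence $\|d_{A}^{\ast}w\|_{L^{2}}\le c\|F_{A}\|_{L^{2}}\|v\|_{L^{\infty}}$, which is the source of the factor $(1+\|F_{A}\|_{L^{2}})$ in the target inequality. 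The remaining piece satisfies $d_{A}^{-}w=d_{A}d_{A}^{\ast}v-f$, so it is controlled through (2.2b) applied to $v$: integrating against $v$ expresses $\|\na_{A}v\|_{L^{2}}^{2}$ in terms of $\|v\|_{L^{2}}\|f\|_{L^{\sharp,2}}$ plus benign curvature terms, and Hodge orthogonality then bounds $\|d_{A}^{-}w\|_{L^{2}}^{2}$.

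To close the estimate I would invoke the Green kernel $\G(\cdot,\cdot)$ of $d^{\ast}d$ together with a Kato inequality $\De|v|\le|\na_{A}^{\ast}\na_{A}v|\le 2|f|+c|v|+c|F_{A}^{+}||v|$, whose right-hand side is read off (2.2b). Convolving with $\G$ yields $\|v\|_{L^{\infty}}\le c(\|f\|_{L^{\sharp}}+\|v\|_{L^{2}})+c\|F_{A}^{+}\|_{L^{\sharp}}\|v\|_{L^{\infty}}$, and for $\varepsilon$ sufficiently small the last term is absorbed on the left, giving $\|v\|_{L^{\infty}}\le c(\|f\|_{L^{\sharp,2}}+\|v\|_{L^{2}})$. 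The analogous argument applied to $|w|$ (using (2.2a)) bounds $\int|F_{A}^{+}||w|^{2}$ by $c\|F_{A}^{+}\|_{L^{\sharp,2}}(\|\na_{A}w\|_{L^{2}}^{2}+\|w\|_{L^{2}}^{2})$, which is absorbed into $\|\na_{A}w\|_{L^{2}}^{2}$ on the left. Putting the $L^{2}$ bound on $w$ together with the bootstrapped control of $\|\na_{A}w\|_{L^{2}}^{2}$ then delivers the claimed inequality.

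The main obstacle is the nested absorption bookkeeping: $F_{A}^{+}$ is controlled only in the small $L^{\sharp,2}$-sense, while the full $F_{A}$ (only bounded in $L^{2}$) enters through the identity $d_{A}^{\ast}d_{A}^{\ast}v=\ast[F_{A},v]$ and so must remain linear, producing the multiplicative factor $1+\|F_{A}\|_{L^{2}}$. The delicate step is that every appearance of $\|v\|_{L^{\infty}}$ or $\|w\|_{L^{\infty}}$ must be reduced, through the Green-kernel and Kato arguments, to quantities already on the target right-hand side, without re-generating terms that the smallness of $\varepsilon$ cannot absorb.
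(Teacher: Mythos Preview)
The paper does not prove this lemma at all: it is quoted verbatim as \cite[Corollary~5.10]{Feehan} and then used as a black box in the proof of Proposition~\ref{AP1}. So there is no ``paper's own proof'' to compare against. Your outline --- Weitzenb\"ock identities on $\Om^{1}$ and $\Om^{2,+}$, Kato's inequality, and absorption via the Green kernel of $d^{\ast}d$ under the $L^{\sharp}$-smallness of $F_{A}^{+}$ --- is exactly the machinery Feehan develops in that reference, so the strategy is correct in spirit.

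There is, however, one concrete slip in your bookkeeping. You claim that the factor $(1+\|F_{A}\|_{L^{2}})$ arises from $d_{A}^{\ast}w=d_{A}^{\ast}d_{A}^{\ast}v=\ast[F_{A},v]$. But for $v\in\Om^{2,+}$ the $4$-form $[F_{A}^{-},v]$ vanishes pointwise (anti-self-dual wedge self-dual is zero), so in fact $d_{A}^{\ast}d_{A}^{\ast}v=-\ast[F_{A}^{+},v]$ and $\|d_{A}^{\ast}w\|_{L^{2}}\le c\|F_{A}^{+}\|_{L^{2}}\|v\|_{L^{\infty}}$, which is \emph{small} and absorbable --- it cannot produce the full-curvature prefactor. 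In Feehan's argument the $(1+\|F_{A}\|_{L^{2}})$ enters at a different stage, essentially when passing between the connection-dependent $L^{2}_{1,A}$ Sobolev norm and elliptic estimates for a first-order operator whose coefficients involve $A$; see the string of lemmas leading to Corollary~5.10 in \cite{Feehan}. Your sketch would still close, but with a better constant than stated, and your explanation of where the $\|F_{A}\|_{L^{2}}$ dependence originates is not right.
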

Following the idea in \cite[Proposition 7.3]{Feehan}, we then have
\begin{proposition}\label{AP1}
Let $X$ be a closed, oriented, smooth four-manifold with Riemannian
metric $g$. Then is a positive constant $\varepsilon$ with the following significance. Let $A$ be an smooth connection on $G$-bundle $P$ over $X$ such that $\|F_{A}^{+}\|_{L^{\sharp,2}(X)}<\varepsilon$. Let $\mu$ be a positive constant. If $\mu_{g}(A)\geq\mu$, then there is a positive constant $C=C(\mu,g,P)$ such that for all $\xi\in L^{\sharp,2}(\Om^{2,+}\otimes\mathfrak{g}_{P})$,
$$\|d^{+,\ast}_{A}G_{A}\xi\|_{L^{2}(X)}\leq C\|\xi\|_{L^{\frac{4}{3}}(X)},$$
$$	\|d_{A}^{+,\ast}G_{A}\xi\|_{L^{2}(X)}\leq C\|\xi\|_{L^{\sharp,2}(X)}.$$
\end{proposition}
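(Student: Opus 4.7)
The plan is to run the standard pairing argument for the self-adjoint elliptic operator $d_{A}^{+}d_{A}^{+,\ast}$, reducing both estimates to the single $L^{4}$-bound of Lemma~\ref{L2}. Setting $u := G_{A}\xi \in \Om^{+}(\mathfrak{g}_{P})$, the definition of $G_{A}$ gives $d_{A}^{+}d_{A}^{+,\ast}u = \xi$, and integration by parts yields the fundamental identity
\[
\|d_{A}^{+,\ast}u\|_{L^{2}(X)}^{2} \;=\; \langle d_{A}^{+}d_{A}^{+,\ast}u,\, u\rangle_{L^{2}(X)} \;=\; \langle \xi,\, u\rangle_{L^{2}(X)}.
\]
Everything from here is a matter of pairing $\xi$ against $u$ in a useful way and converting the $u$-side back into $\|d_{A}^{+,\ast}u\|_{L^{2}}$.

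For the first inequality, I would apply H\"older to write $\langle \xi, u\rangle_{L^{2}} \leq \|\xi\|_{L^{4/3}(X)}\,\|u\|_{L^{4}(X)}$. To turn $\|u\|_{L^{4}}$ into $\|d_{A}^{+,\ast}u\|_{L^{2}}$, I invoke Lemma~\ref{L2}: since $\|F_{A}^{+}\|_{L^{2}(X)} \leq \|F_{A}^{+}\|_{L^{\sharp,2}(X)} < \varepsilon$, after shrinking $\varepsilon$ if necessary to match the threshold of Lemma~\ref{L2}, the lemma yields
\[
\|u\|_{L^{4}(X)}^{2} \;\leq\; C\bigl(\|d_{A}^{+,\ast}u\|_{L^{2}(X)}^{2} + \|u\|_{L^{2}(X)}^{2}\bigr).
\]
The spectral hypothesis $\mu_{g}(A)\geq \mu$ inserted into Definition~\ref{D2} gives $\|u\|_{L^{2}}^{2} \leq \mu^{-1}\|d_{A}^{+,\ast}u\|_{L^{2}}^{2}$, hence $\|u\|_{L^{4}} \leq C(g,\mu)\|d_{A}^{+,\ast}u\|_{L^{2}}$. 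Substituting into the fundamental identity and dividing out one factor of $\|d_{A}^{+,\ast}u\|_{L^{2}}$ delivers $\|d_{A}^{+,\ast}u\|_{L^{2}} \leq C\|\xi\|_{L^{4/3}}$, which is the first claimed estimate.

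The second inequality then essentially comes for free on the compact manifold $X$: one has the continuous embedding $L^{2}(X) \hookrightarrow L^{4/3}(X)$ with constant depending only on $g$, and the definition of the $L^{\sharp,2}$-norm directly gives $\|\xi\|_{L^{2}(X)} \leq \|\xi\|_{L^{\sharp,2}(X)}$. Chaining these with the first inequality produces the second bound with a constant that still depends only on $\mu$, $g$, $P$. The main obstacle I anticipate is purely a bookkeeping one: verifying that a single smallness threshold $\varepsilon$ for $\|F_{A}^{+}\|_{L^{\sharp,2}}$ simultaneously activates Lemma~\ref{L2} and keeps the Weitzenb\"ock curvature error absorbable, and then tracking how the final constant $C$ depends on $\mu$, $g$, and $P$ through the chain Lemma~\ref{L2} $\to$ Definition~\ref{D2} $\to$ pairing identity. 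No further analytic input beyond the Weitzenb\"ock formula (2.2b) underlying Lemma~\ref{L2} is required.
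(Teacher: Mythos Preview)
Your argument for the first inequality is essentially identical to the paper's: both set $v=G_{A}\xi$, use the pairing identity $\|d_{A}^{+,\ast}v\|_{L^{2}}^{2}=\langle\xi,v\rangle_{L^{2}}$, apply H\"older with the dual pair $(4/3,4)$, and then convert $\|v\|_{L^{4}}$ back to $\|d_{A}^{+,\ast}v\|_{L^{2}}$ via Lemma~\ref{L2} (equivalently Corollary~\ref{C3}) together with the spectral lower bound $\mu_{g}(A)\geq\mu$.

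For the second inequality your route diverges from the paper's. You deduce it trivially from the first via the chain $\|\xi\|_{L^{4/3}}\leq c\|\xi\|_{L^{2}}\leq c\|\xi\|_{L^{\sharp,2}}$ on the compact manifold; this is correct for the inequality \emph{as literally stated}. The paper, however, actually proves the stronger bound $\|d_{A}^{+,\ast}G_{A}\xi\|_{L^{2}_{1}}\leq C\|\xi\|_{L^{\sharp,2}}$ (the $L^{2}$ in the displayed statement is evidently a misprint for $L^{2}_{1}$), by invoking Lemma~\ref{AL2} to control $\|d_{A}^{+,\ast}v\|_{L^{2}_{1}}$ in terms of $\|d_{A}^{+}d_{A}^{+,\ast}v\|_{L^{\sharp,2}}+\|v\|_{L^{2}}$ and then absorbing $\|v\|_{L^{2}}\leq\mu^{-1}\|\xi\|_{L^{2}}$. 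Your embedding shortcut buys simplicity but cannot reach the $L^{2}_{1}$ conclusion, and it is precisely the $L^{2}_{1}$ bound that is used downstream in the proof of Theorem~\ref{T3} (to feed the Sobolev embedding $L^{2}_{1}\hookrightarrow L^{4}$ and the multiplication $L^{2\sharp}\otimes L^{2\sharp}\to L^{\sharp}$ needed for the fixed-point argument). So while nothing you wrote is wrong, you have proved a weaker result than the paper does, and the gap matters for the application.
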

\begin{proof}
We  set $v=G_{A}\xi$, i.e, $d_{A}^{+}d_{A}^{+,\ast}v=\xi$. We first consider the $L^{2}$-estimate for $a:=d_{A}^{+,\ast}G_{A}\xi$. We integrate by parts to get
$$\|a\|^{2}_{L^{2}}=\|d_{A}^{+,\ast}v\|_{L^{2}}^{2}=\langle d_{A}^{+}d_{A}^{+,\ast}v,v\rangle_{L^{2}}=\langle \xi,v\rangle_{L^{2}}.$$
By Corollary \ref{C3} and Sobolev inequality, we then have
\begin{equation}\nonumber
\|a\|^{2}_{L^{2}}\leq\|\xi\|_{L^{\frac{4}{3}}}\|v\|_{L^{4}}\leq c(1+\mu^{-\frac{1}{2}})\|\xi\|_{L^{\frac{4}{3}}}\|d_{A}^{+,\ast}v\|_{L^{2}}.
\end{equation}
Therefore,
$$\|d_{A}^{+,\ast}v\|_{L^{2}}\leq c\|\xi\|_{L^{\frac{4}{3}}},$$
where $c=c(g,\mu)$ is a positive constant. 

We next derive the $L^{2}_{1}$ estimate for $a$. Since $a=d_{A}^{+,\ast}v$, then
\begin{equation}\nonumber
\begin{split}
\|a\|_{L^{2}}=\|d_{A}^{+,\ast}v\|_{L^{2}}&=\langle d_{A}^{+}d_{A}^{+,\ast}v,v\rangle_{L^{2}}\leq 2\|d_{A}^{+}d_{A}^{+,\ast}v\|_{L^{2}}+2\|v\|_{L^{2}}\\
&\leq 2(1+\mu^{-1})\|d_{A}^{+}d_{A}^{+,\ast}v\|_{L^{2}}.\\
\end{split}
\end{equation}
By Lemma \ref{AL2}, we have
\begin{equation}\nonumber
\begin{split}
\|a\|_{L^{2}_{1}}&\leq C(1+\|F_{A}\|_{L^{2}})(\|d_{A}^{+}d_{A}^{+,\ast}v\|_{L^{\sharp,2}}+\|v\|_{L^{2}})\\
&\leq C(1+\|F_{A}\|_{L^{2}})(\|d_{A}^{+}d_{A}^{+,\ast}v\|_{L^{\sharp,2}}+\mu^{-1}\|d_{A}^{+}d_{A}^{+,\ast}v\|_{L^{2}})\\
&\leq C(1+\|F_{A}\|_{L^{2}})(1+\mu^{-1})\|\xi\|_{L^{\sharp,2}}.\\
\end{split}
\end{equation}
\end{proof} 
\begin{lemma}(\cite[Lemma 7.2.23]{Donaldson/Kronheimer} and \cite[Lemma 7.5]{Feehan/Leness})\label{AL1}
Let $q:\mathcal{B}\rightarrow\mathcal{B}$ be a continuous map on a Banach space $\mathcal{B}$ with $q(0)=0$ and 
$$\|q(x_{1})-q(x_{2})\|\leq K(\|x_{1}\|+\|x_{2}\|)\|x_{1}-x_{2}\|$$
for some positive constant $K$ and all $x_{1},x_{2}$ in $\mathcal{B}$. Then for each $y$ in $\mathcal{B}$ with $\|y\|<\frac{1}{10K}$ there is a unique $x$ in $\mathcal{B}$ such that $\|x\|\leq\frac{1}{5K}$ and $x+q(x)=y$.
\end{lemma}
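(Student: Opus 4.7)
The plan is to apply the Banach contraction mapping principle to the map $T:\mathcal{B}\to\mathcal{B}$ defined by $T(x):=y-q(x)$, whose fixed points are exactly the solutions of $x+q(x)=y$. I will work in the closed ball $B_{r}:=\{x\in\mathcal{B}:\|x\|\leq r\}$ with radius $r:=\tfrac{1}{5K}$, and verify the two standard hypotheses: that $T$ preserves $B_{r}$, and that $T$ is a strict contraction on $B_{r}$.

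First I would extract a quadratic growth bound for $q$ near the origin by specializing the given Lipschitz-type estimate to $x_{2}=0$ and using $q(0)=0$; this yields $\|q(x)\|\leq K\|x\|^{2}$ for every $x\in\mathcal{B}$. For $x\in B_{r}$ this gives
$$\|T(x)\|\leq\|y\|+\|q(x)\|<\tfrac{1}{10K}+Kr^{2}=\tfrac{1}{10K}+\tfrac{1}{25K}=\tfrac{7}{50K}<\tfrac{1}{5K}=r,$$
so $T(B_{r})\subset B_{r}$. For the contraction property, applying the hypothesis directly to $x_{1},x_{2}\in B_{r}$ gives
$$\|T(x_{1})-T(x_{2})\|=\|q(x_{1})-q(x_{2})\|\leq K(\|x_{1}\|+\|x_{2}\|)\|x_{1}-x_{2}\|\leq 2Kr\|x_{1}-x_{2}\|=\tfrac{2}{5}\|x_{1}-x_{2}\|,$$
so $T$ is a contraction with factor $2/5<1$. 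The Banach fixed-point theorem then furnishes a unique fixed point $x\in B_{r}$, which is the required solution of $x+q(x)=y$ with $\|x\|\leq\tfrac{1}{5K}$, and the uniqueness clause in the statement is exactly the uniqueness within $B_{r}$ provided by Banach.

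There is no substantive obstacle to this argument; it is a standard bookkeeping exercise with constants. The only genuine conceptual point is noticing that the bilinear-type Lipschitz estimate combined with $q(0)=0$ forces $q$ to vanish quadratically at the origin, and this is what lets the radius $r$ of the ball on which the fixed-point scheme closes be chosen in terms of $K$ alone, independent of any other data.
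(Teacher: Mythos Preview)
Your argument is correct: the map $T(x)=y-q(x)$ is a contraction on the closed ball of radius $\tfrac{1}{5K}$, and the arithmetic checks ($\tfrac{7}{50K}<\tfrac{1}{5K}$ for invariance, Lipschitz factor $\tfrac{2}{5}<1$).

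Note that the paper does not actually supply its own proof of this lemma; it simply quotes the statement from \cite[Lemma~7.2.23]{Donaldson/Kronheimer} and \cite[Lemma~7.5]{Feehan/Leness}. Your proof is exactly the standard contraction-mapping argument one finds in those references, so there is nothing to contrast. (Incidentally, in the proof of Theorem~\ref{T3} that follows, the paper does unwind the Picard iteration explicitly---defining $\xi_k$ inductively and showing the sequence is Cauchy---which is of course the same mechanism underlying your fixed-point argument.)
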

\begin{proof}[\textbf{Proof of Theorem \ref{T3}}] We try to solve equation (\ref{EA1}) for the solutions of the form $u=G_{A}\xi$, where $\xi\in L^{\sharp,2}(\Om^{2,+}\otimes\mathfrak{g}_{P})$ and $G_{A}$ is the Green's operator for the Laplacian $d_{A}^{+}d_{A}^{+,\ast}$. We now seek solutions to equation (\ref{EA5}). We now apply Lemma \ref{AL1} to equation (\ref{EA5}) with $\mathcal{B}=L^{\sharp,2}(\Om^{2,+}\otimes\mathfrak{g}_{P})$, choosing $y=-F^{+}_{A}$ and $q(\xi)=(d_{A}^{+,\ast}G_{A,\mu}\xi\wedge d_{A}^{+,\ast}G_{A,\mu}\xi)^{+}$, so our goal is to solve
$$\xi+q(\xi)=-F_{A}^{+}\ on\ L^{\sharp,2}(\Om^{2,+}\otimes\mathfrak{g}_{P}).$$
For any $\xi_{1},\xi_{2}\in L^{\sharp,2}(\Om^{2,+}\otimes\mathfrak{g}_{P})$, the estimate in \cite[Proposition 7.3]{Feehan/Leness} yields
$$\|d_{A}^{+,\ast}G_{A}\xi_{i}\|_{L^{2}_{1}}\leq C\|\xi_{i}\|_{L^{\sharp,2}},$$
where $C=(C,\mu,\|F_{A}\|)$.

We define 
$$\|u\|_{L^{2\sharp}}:=\sup_{x\in X}\|\operatorname{dist}_{g}^{-1}(x,\cdot)|u|\|_{L^{2}}.$$
Then we have a continuous embedding $L_{1}^{2}\subset L^{2\sharp}$ \cite[Lemma 4.1]{Feehan} and $L^{2\sharp}\otimes L^{2\sharp}\rightarrow L^{\sharp}$ is continuous \cite[Lemma 4.3]{Feehan}. By the H\"{o}lder’s inequality  and the $L^{\sharp,2}$-family of embedding and multiplication results of Feehan, we obtain
\begin{equation}\nonumber
\begin{split}
\|q(\xi_{1})-q(\xi_{2})\|_{L^{\sharp,2}}&=\|q(\xi_{1})-q(\xi_{2})\|_{L^{2}}+\|q(\xi_{1})-q(\xi_{2})\|_{L^{\sharp}}\\
&\leq K_{1}(\|d_{A}^{+,\ast}G_{A}\xi_{1}\|_{L^{4}}+\|d_{A}^{+,\ast}G_{A}\xi_{i}\|_{L^{4}})(\|d_{A}^{+,\ast}G_{A}\xi_{1}-d_{A}^{+,\ast}G_{A}\xi_{i}\|_{L^{4}})\\
&+K_{1}(\|d_{A}^{+,\ast}G_{A}\xi_{1}\|_{L^{2\sharp}}+\|d_{A}^{+,\ast}G_{A}\xi_{i}\|_{L^{2\sharp}})(\|d_{A}^{+,\ast}G_{A}\xi_{1}-d_{A}^{+,\ast}G_{A}\xi_{i}\|_{L^{2\sharp}})\\
&\leq K_{2}(\|d_{A}^{+,\ast}G_{A}\xi_{1}\|_{L^{2}_{1}}+\|d_{A}^{+,\ast}G_{A}\xi_{i}\|_{L^{2}_{1}})(\|d_{A}^{+,\ast}G_{A}\xi_{1}-d_{A}^{+,\ast}G_{A}\xi_{i}\|_{L^{2}_{1}})\\
&\leq CK_{2}(\|\xi_{1}\|_{L^{\sharp,2}}+\|\xi_{2}\|_{L^{\sharp,2}})\|\xi_{1}-\xi_{2}\|_{L^{\sharp,2}},\\
\end{split}
\end{equation}
where $K_{2}$ is a universal constant. Thus, provided $\varepsilon\leq \frac{1}{10 CK_{2}}$, we have $\|F^{+}_{A}\|_{L^{\sharp,2}}\leq\frac{1}{10 CK_{2}}$ and Lemma \ref{AL1} implies that there is a unique solution $\xi\in L^{\sharp,2}(\Om^{2,+}\otimes\mathfrak{g}_{P})$ to equation (\ref{EA5}) such that $\|\xi\|_{L^{\sharp,2}}\leq\frac{1}{5CK_{2}}$.
 
By Proposition \ref{AP1} and H\"{o}lder inequality, we observe that
\begin{equation*}
\begin{split}
\|q(\zeta_{1})-q(\zeta_{2})\|_{L^{\frac{4}{3}}}&\leq \|d_{A}^{+,\ast}G_{A}\zeta_{1}-d_{A}^{+,\ast}G_{A}\zeta_{2}\|_{L^{2}}\|d_{A}^{+,\ast}G_{A}\zeta_{1}+d_{A}^{+,\ast}G_{A}\zeta_{1}\|_{L^{4}}\\
&\leq c\|\zeta_{1}-\zeta_{2}\|_{L^{\frac{4}{3}}}(\|\zeta_{1}\|_{L^{\sharp,2}}+\|\zeta_{2}\|_{L^{\sharp,2}}),\\
\end{split}
\end{equation*}
where $c=c(\mu,g,\|F_{A}\|)$.

We denote $\zeta_{k}=\xi_{k}-\xi_{k-1}$ and $\zeta_{1}=\xi_{1}$, then
$$\xi_{1}=-F^{+}_{A},\  \xi_{2}=q(\xi_{1})$$
and $$\xi_{k}=q(\sum_{i=1}^{k-1}\xi_{i})-q(\sum_{i=1}^{k-2}\xi_{i}),\ \forall\ k\geq 3.$$
It is easy to show that, under the assumption of $F^{+}_{A}$, the sequence $\zeta_{k}$ defined by
$$\zeta_{k}=q(\zeta_{k-1})-F^{+}_{A},$$
starting with $\xi_{1}=-F^{+}_{A}$, is Cauchy with respect to $L^{\sharp,2}$, and so converges to a limit $\zeta$ in the completion of $\Om^{2,+}\otimes\mathfrak{g}_{P}$ under $L^{\sharp,2}$.

There are positive constant $\varepsilon\in(0,1)$ and $C\in(1,\infty)$ with following significance. If the connection $A$ satisfies $$\|F^{+}_{A}\|_{L^{\sharp,2}}\leq\varepsilon,$$
then each $\xi_{k}$ exists and is $C^{\infty}$. Further for each $k\geq1$, we have
\begin{equation}\label{EA10}
\|\xi_{k}\|_{L^{\frac{4}{3}}}\leq C^{k-1}\|F^{+}_{A}\|_{L^{\frac{4}{3}}}\|F^{+}_{A}\|^{k-1}_{L^{\sharp,2}}.
\end{equation}
\begin{equation}\label{EA11}
\|\xi_     {k}\|_{L^{\sharp,2}}\leq C^{k-1}\|F^{+}_{A}\|^{k-1}_{L^{\sharp,2}}.
\end{equation}
The proof is by induction on the integer $k$. The induction begins with $k=1$, one can see $\xi_{1}=-F^{+}_{A}$. The induction proof if completed by demonstrating that if (\ref{EA10}) and (\ref{EA11}) are satisfied for $j<k$, then these also satisfied for $j=k$. Indeed, since
\begin{equation}\nonumber
\begin{split}
\|q(\sum_{i=1}^{k-1}\xi_{i})-q(\sum_{i=1}^{k-2}\xi_{i})\|_{L^{\sharp,2}}
&\leq c\|\sum_{i=1}^{k-1}\xi_{i}+\sum_{i=1}^{k-2}\xi_{i}\|_{L^{2,\sharp}}\|\xi_{k-1}\|_{L^{\sharp,2}},\\
&\leq 2c\sum\|\xi_{i}\|_{L^{\sharp,2}}\|\xi_{k-1}\|_{L^{\sharp,2}},\\
&\leq 2c\frac{\|F^{+}_{A}\|_{L^{\sharp,2}}}{1-C\|F^{+}_{A}\|_{L^{,\sharp,2}}}C^{k-2}\|F^{+}_{A}\|^{k-2}_{L^{\sharp,2}},\\
&\leq \frac{2c}{1-C\|F^{+}_{A}\|_{L^{,\sharp,2}}}C^{k-2}\|F^{+}_{A}\|^{k-1}_{L^{\sharp,2}}.\\
\end{split}
\end{equation}
Now, we provide the constants $\varepsilon$ sufficiently small and $C$ sufficiently large to ensures that
$$\frac{2c}{1-C\|F^{+}_{A}\|_{L^{\sharp,2}}}\leq C,$$  
hence  we complete the Equation (\ref{EA11}) is also satisfied for $j=k$. For the Equation (\ref{EA10}) case, we observe that
\begin{equation}\nonumber
\begin{split}
\|q(\sum_{i=1}^{k-1}\xi_{i})-q(\sum_{i=1}^{k-2}\xi_{i})\|_{L^{\frac{4}{3}}}
&\leq c\|\sum_{i=1}^{k-1}\xi_{i}+\sum_{i=1}^{k-2}\xi_{i}\|_{L^{2,\sharp}}\|\xi_{k-1}\|_{L^{\frac{4}{3}}},\\
&\leq 2c\sum\|\xi_{i}\|_{L^{\sharp,2}}\|\xi_{k-1}\|_{L^{\frac{4}{3}}},\\
&\leq 2c\frac{\|F^{+}_{A}\|_{L^{\sharp,2}}}{1-C\|F^{+}_{A}\|_{L^{,\sharp,2}}}C^{k-2}\|F^{+}_{A}\|_{L^{\frac{4}{3}}}\|F^{+}_{A}\|^{k-2}_{L^{\sharp,2}},\\
&\leq \frac{2c}{1-C\|F^{+}_{A}\|_{L^{,\sharp,2}}}C^{k-2}\|F^{+}_{A}\|_{L^{\frac{4}{3}}}\|F^{+}_{A}\|^{k-1}_{L^{\sharp,2}},\\
\end{split}
\end{equation}
We can provide the constants $\varepsilon$ sufficiently small and $C$ sufficiently large to ensures that  Equation (\ref{EA10}) is also satisfied for $j=k$.

The sequence $\zeta_{k}$ is Cauchy in $L^{\sharp,2}$, the limit $$\zeta:=\lim_{i\rightarrow\infty}\zeta_{i}$$ is a solution to (\ref{EA5}).  Then $L^{\frac{4}{3}}$estimate for $\zeta$,
\begin{equation}\label{EA7}
\|\zeta\|_{L^{\frac{4}{3}}}\leq\sum\|\xi_{k}\|_{L^{\frac{4}{3}}}\leq 2\|F_{A}^{+}\|_{L^{\frac{4}{3}}}.
\end{equation}
Observing that 
\begin{equation}\label{EA8}
d_{A}^{+}d_{A}^{+,\ast}u=d_{A}^{+}d_{A}^{+,\ast}G_{A}\zeta=\zeta
\end{equation}
so that the $L^{2}_{1}$ bound is obtained from
$$\|u\|_{L^{2}_{1}}\leq C\|d_{A}^{+,\ast}u\|_{L^{2}}\leq C\|\zeta\|_{L^{\frac{4}{3}}}\leq C\|F_{A}^{+}\|_{L^{\frac{4}{3}}},$$
where $C=C(g,\mu)$ is a positive constant.
\end{proof}
\section*{Acknowledgements}
I would like to thank the anonymous referee for  careful reading of my manuscript and helpful comments. I would like to thank Professor P. M. N. Feehan and Professor T. G. Leness for helpful comments regarding their article \cite{Feehan/Leness}. This work is supported by National Natural Science Foundation of China No. 11801539 and Postdoctoral Science Foundation of China No. 2017M621998, No. 2018T110616.

\bigskip
\footnotesize

\end{document}